\newtheorem{proposition}{Proposition}[section]
\newtheorem{lemma}[proposition]{Lemma}
\newtheorem{corollary}[proposition]{Corollary}
\newtheorem{theorem}[proposition]{Theorem}
\theoremstyle{definition}
\newtheorem{example}[proposition]{Example}
\theoremstyle{remark}
\newcommand{\thlabel}[1]{\label{th:#1}}
\newcommand{\thref}[1]{Theorem~\ref{th:#1}}
\newcommand{\selabel}[1]{\label{se:#1}}
\newcommand{\seref}[1]{Section~\ref{se:#1}}
\newcommand{\lelabel}[1]{\label{le:#1}}
\newcommand{\leref}[1]{Lemma~\ref{le:#1}}
\newcommand{\prlabel}[1]{\label{pr:#1}}
\newcommand{\prref}[1]{Proposition~\ref{pr:#1}}
\newcommand{\colabel}[1]{\label{co:#1}}
\newcommand{\coref}[1]{Corollary~\ref{co:#1}}
\newcommand{\exlabel}[1]{\label{ex:#1}}
\newcommand{\eqlabel}[1]{\label{eq:#1}}
\newcommand{\equref}[1]{(\ref{eq:#1})}
\newcommand{\Hom}{{\rm Hom}}
\newcommand{\Rat}{{\rm Rat}}
\newcommand{\End}{{\rm End}}
\newcommand{\Ker}{{\rm Ker}\,}
\newcommand{\im}{{\rm Im}\,}
\def\ot{\otimes}
\def\ZZ{{\mathbb Z}}
\newcommand{\Cc}{\mathcal{C}}
\newcommand{\Mm}{\mathcal{M}}
\newcommand{\Oo}{\mathcal{O}}
\newcommand{\Tt}{\mathcal{T}}
\def\*C{{}^*\hspace*{-1pt}{\Cc}}
\def\text#1{{\rm {\rm #1}}}
\def\ol{\overline}
\begin{document}
\title[Comodules over semiperfect corings]{Comodules over semiperfect corings}
\author{S. Caenepeel}
\address{Faculty of Applied Sciences,
Vrije Universiteit Brussel, VUB, B-1050 Brussels, Belgium}
\email{scaenepe@vub.ac.be}
\urladdr{http://homepages.vub.ac.be/\~{}scaenepe/}
\author{M. Iovanov}
\address{Faculty of Mathematics, University of Bucharest, 
Str. Academiei 14, RO-70109 Bucharest, Romania}
\email{myo30@lycos.com}
\thanks{Research supported by the bilateral project
``Hopf Algebras in Algebra, Topology, Geometry and Physics" of the Flemish and
Romanian governments.}
\subjclass{16W30}

\keywords{semiperfect coring, qF-ring, rational module}

\begin{abstract}
We discuss when the Rat functor associated to a coring satisfying the left
$\alpha$-condition is exact.
We study the category of
comodules over a semiperfect coring. We characterize semiperfect corings over
artinian rings and over qF-rings. 
\end{abstract}
\maketitle

\section*{Introduction}
The aim of this note is to generalize properties of semiperfect coalgebras
over fields,
as discussed in \cite{NGT}, see also \cite{DascalescuNR}, to semiperfect
corings. We also extend some results given in \cite{BW}. 
\\
Corings were introduced by Sweedler \cite{Sweedler65}. A coring over a
(possibly noncommutative) ring $R$ is a
coalgebra (or comonoid) in the category of $R$-bimodules.
Since the beginning of the 21st century, there has been a renewed interest 
in corings and comodules over a coring, iniated by Brzezi\'nski's paper \cite{B}.
The key point is that Hopf modules and most of their generalizations
(relative Hopf modules, graded modules, Yetter-Drinfeld modules and many more)
are comodules over a certain coring. This observation appeared
in MR 2000c 16047 written by Masuoka, who tributed it to Takeuchi, but apparently
it was already known by Sweedler, at least in the case of Hopf modules.
It has lead to a unified and simplified treatment of the above mentioned modules,
and new viewpoints on subjects like descent theory and Galois theory. For an
extensive treatment, we refer to \cite{BW}.\\
In this paper, we study semiperfect corings. A coring is called right semiperfect
if it satisfies the left $\alpha$-condition, and the (abelian) category of right $\Cc$-comodules
is semiperfect, which means that every simple object has a projective cover.
It turns out that this notion is closely related to rationality properties of modules over the
dual of the coring (which is a ring). Rationality properties have been studied
in \cite{A} and \cite{CVW}. The Rat functor sends a module over the dual of the coring
to its largest rational submodule. It can be described using the category $\sigma[M]$.
The category $\sigma[M]$ is discussed briefly in \seref{1}, and the Rat functor is
introduced in \seref{2}. General facts on the category $\sigma[M]$ show that the exactness of the Rat functor is connected to some topological properties of the base ring $R$, more precisely the $M$-adic topology on $M$. In the case of corings, the $\Cc$-adic topology on ${}^{*}\Cc$ coincides with the finite topology, motivating a general study of the properties of the finite topology. We then give some connections between density properties, direct sum decompositions and the exactness of Rat. We show 
(see \coref{2.6}) that the
Rat functor is exact if the coring $\Cc$
can be decomposed as a direct sum of finitely generated left $\Cc$-comodules.
Under certain conditions, which hold if $R$ is a qF-ring, we can prove the converse,
namely if Rat is exact, then there is a direct sum decomposition of $\Cc$ into
finitely generated comodules. This is in fact an application of the duality
between left and right finitely generated modules over qF-rings.\\ 
In \seref{3}, we characterize semiperfect corings over artinian rings. The main result
is \thref{3.1}, stating that a coring over an artinian ring is right semiperfect if and only
if the category of right comodules has enough projectives, if and only if it has
a projective generator, if and only if every finitely generated comodule has a finitely generated projective cover.\\
In \seref{5}, we discuss some applications and examples. First, we apply our
results to the case where $R$ is a qF-ring. We recover a result of \cite{Kaoutit3}
telling that a left and right (locally) projective coring over a qF-ring is
right semiperfect if and only if the Rat functor is exact. Also two-sided
prefectness is equivalent to two-sided semiperfectness for corings over qF-rings.\\
finally, we give some examples, focussing on the Sweedler coring associated
to a ring morphism. In particular, we can describe the Rat functor in this situation,
and we can discuss when the assumptions of the results in Section \ref{se:3}
and \ref{se:5.1} are satisfied.

\section{Preliminary results}\selabel{1}
\subsection{The category $\sigma[M]$}\selabel{1.1}
Let $R$ be a ring, and $M\in {}_R\Mm$. Recall from \cite[Sec. 15]{W}
that $\sigma[M]$ is the full subcategory
of ${}_R\Mm$ consisting of $R$-modules that are subgenerated by $M$,
that is, submodules of an epimorphic image of $M^{(I)}$, for some
index set $I$. $\sigma[M]$ is the smallest closed subcategory of
${}_R\Mm$ containing $M$. Since epimorphic images of objects of $\sigma[M]$
belong to $\sigma[M]$ (see \cite[Prop. 15.1]{W}), we have for any
$N\in {}_R\Mm$ that
$$\Tt^M(N)=\sum \{f(X)~|~X\in \sigma[M],~f\in {}_R\Hom(N,X)\}\in \sigma[M].$$
$\Tt^M:\ {}_R\Mm\to \sigma[M]$ is called the trace functor, and it is straightforward
to show that $\Tt^M$ is the right adjoint of the inclusion functor
$i:\ \sigma[M]\to {}_R\Mm$. Therefore $\Tt^M$ is left exact; it is also not difficult
to see that
$$\Tt^M(N)=\sum \{X~|~X\subset \sigma[M],~X\subset M\}.$$
For $X,Y\in {}_R\Hom(X,Y)$, we consider the finite topology on 
${}_R\Hom(X,Y)$. A basis of open sets consists of
$$\Oo(f,x_1,\cdots, x_n)=\{g\in {}_A\Hom(X,Y)~|~
g(x_i)=f(x_i),~{\rm for~all}~i=1,\cdots,n\}$$
We have a natural map $r:\ R\to {}_\ZZ\Hom(M,M)$, $r_a(m)=am$. The
finite topology on ${}_ZZ\Hom(M,M)$ induces a topology on $R$, called the
$M$-adic topology.\\
An ideal $T$ of $R$ is called $M$-dense in $R$ if it is dense in the
$M$-adic topology. This means that for all $a\in R$ and $m_1,\cdots,m_n\in M$,
there exists a $b\in T$ such that $am_i=bm_i$, for all $i$.
A left $T$-module $N$ is called unital if for every
$n\in N$, there exists $t\in T$ such that $tn=n$, or, equivalently,
for every finite $\{n_1,\cdots,n_k\}\subset N$, there exists $t\in N$
such that $tn_i=n_i$, for all $i$.\\
The proof of \prref{1.1} is straightforward; we also refer to \cite[Sec. 41]{BW}.

\begin{proposition}\prlabel{1.1}
Let $R$ be a ring, and $M\in {}_R\Mm$.\\
(a) For an ideal $T$ of $R$, and a faithful $R$-module $M$, the
following assertions are equivalent.
\begin{itemize}
\item[(i)] $T$ is $M$-dense in $R$;
\item[(ii)] $M$ is a unital $T$-module (with the induced structure from $R$);
\item[(iii)] $TN=N$ for all $N\in \sigma[M]$;
\item[(iv)] the multiplication map $T\ot_R N\to N$ is an isomorphism.
\end{itemize}
(b) $T=\Tt^M(A)$ is an ideal of $A$, and the following assertions are
equivalent.
\begin{itemize}
\item[(i)] $T$ is $M$-dense in $A$;
\item[(ii)] $M$ is a $T$-unital module;
\item[(iii)] $\Tt^{M}$ is exact;
\item[(iv)] $T^{2}=T$ and $T$ is a generator in $\sigma[M]$.
\end{itemize}
\end{proposition}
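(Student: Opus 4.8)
The plan is to base everything on one elementary observation: for a two-sided ideal $T$ of $R$, the property that $n\in Tn$ for every element $n$ is stable under passage to submodules, to quotients and to arbitrary direct sums. As every object of $\sigma[M]$ is a submodule of a quotient of some $M^{(I)}$, this immediately gives the equivalence of ``$M$ is a unital $T$-module'', ``every $N\in\sigma[M]$ is $T$-unital'' and ``$TN=N$ for all $N\in\sigma[M]$''; moreover $M$-density of $T$ is literally $T$-unitality of $M$ (put $a=1$ in the definition of density; conversely apply $T$-unitality of $M$ to the elements $am_1,\dots,am_n\in M$, obtaining $b\in T$ with $(ba)m_i=am_i$ and $ba\in T$). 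This disposes of (i)$\Leftrightarrow$(ii)$\Leftrightarrow$(iii) in (a) and of (i)$\Leftrightarrow$(ii) in (b).

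For (a)(iv), the image of $\mu_N\colon T\ot_RN\to N$ is $TN$, so $\mu_N$ is surjective for every $N\in\sigma[M]$ exactly when (iii) holds; the real content is the injectivity of $\mu_N$ granting (iii), or equivalently the vanishing of $\Ker\mu_N\cong\operatorname{Tor}_1^R(R/T,N)$ for $N\in\sigma[M]$. I would first reduce, using that $\ot$ commutes with direct limits and that $\mu$ is natural, to finitely generated $N$; such an $N$ is a quotient of a finitely generated submodule of some $M^k$ with $k$ finite, and on these one uses the density of $T$ together with the faithfulness of $M$. (For cyclic $N\cong R/J$ with $J=\{r\in R\mid rn=0\}$, this reduces to $T\cap J=TJ$, which one extracts from the fact that $R/J\in\sigma[M]$ forces $T+J=R$.) This is the step I expect to be the main obstacle; it is the place where faithfulness of $M$ is really used, and it is carried out in \cite[Sec.~15]{W} and \cite[Sec.~41]{BW}.

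For (b), one checks first that $T=\Tt^M(R)$ is a two-sided ideal: it is a left ideal by construction, and a right ideal because for a submodule $X\subseteq R$ lying in $\sigma[M]$ the set $Xr$ is a homomorphic image of $X$, hence again in $\sigma[M]$. To prove (ii)$\Rightarrow$(iii) I would establish that $\Tt^M(N)=TN$ for \emph{every} $N\in{}_R\Mm$: one has $TN=\sum_nTn\in\sigma[M]$ (each $Tn$ is a quotient of $T\in\sigma[M]$) and $TN\subseteq N$, so $TN\subseteq\Tt^M(N)$; on the other hand $\Tt^M(N)\in\sigma[M]$ is $T$-unital by the first paragraph, so $\Tt^M(N)=T\,\Tt^M(N)\subseteq TN$. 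Thus the functor $N\mapsto TN$ coincides with $\Tt^M$ (both act by restriction on morphisms); it is left exact, being a right adjoint, and it clearly preserves epimorphisms (a surjection $N\to N''$ carries $TN$ onto $TN''$), hence it is exact. For (iii)$\Rightarrow$(ii) I would apply the exact functor $\Tt^M$ to the exact sequence $0\to\{r\in R\mid rm=0\}\to R\to Rm\to0$ for $m\in M$ (the second map being $r\mapsto rm$): since $\Tt^M(R)=T$ and $\Tt^M(Rm)=Rm$, exactness forces the induced map $t\mapsto tm$ to carry $T$ onto $Rm$, so $m\in Rm=Tm$, i.e.\ $M$ is $T$-unital.

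It remains to treat (ii)$\Leftrightarrow$(iv). If $M$ is $T$-unital, then $T\in\sigma[M]$ is $T$-unital, whence $T^2=T$; and every $N\in\sigma[M]$ equals $\sum_{n\in N}Tn$, which is a homomorphic image of $T^{(N)}$, so $T$ is a generator of $\sigma[M]$. Conversely, assume $T^2=T$ and $T$ is a generator; for $N\in\sigma[M]$ pick an epimorphism $\phi\colon T^{(I)}\to N$, so that for each $i$, by $R$-linearity of $\phi$, $\phi(\iota_i(T))=\phi(\iota_i(T^2))\subseteq T\,\phi(\iota_i(T))\subseteq TN$, whence $N=\sum_i\phi(\iota_i(T))\subseteq TN$ and so $TN=N$ for all $N\in\sigma[M]$. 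Applying this with $N=Rm$ gives $Rm=T(Rm)=Tm$, so $m\in Tm$ and $M$ is $T$-unital. (In carrying out (a)(iv) one must in addition be careful about the interplay of the left and right $R$-module structures on the two-sided ideal $T$, so that elements of $T$ may legitimately be moved across $\ot_R$.)
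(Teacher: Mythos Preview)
The paper does not actually prove this proposition: the sentence preceding it reads ``The proof of \prref{1.1} is straightforward; we also refer to \cite[Sec.~41]{BW}.'' Your outline is therefore already far more explicit than what appears in the paper, and the overall strategy you pursue---reduce everything to the element-wise condition $n\in Tn$, propagate it through $\sigma[M]$, and in (b) identify $\Tt^{M}$ with the functor $N\mapsto TN$---is exactly the standard route behind the cited reference.

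One genuine caution concerns your parenthetical for (a)(iv). For cyclic $N\cong R/J$ with $J=\mathrm{Ann}(n)$, you reduce injectivity of $\mu_N$ to $T\cap J=TJ$ and propose to extract this from $T+J=R$. That is the commutative argument; when $R$ is noncommutative and $J$ is merely a left ideal, writing $1=t_0+j_0$ and decomposing $x\in T\cap J$ as $x=t_0x+j_0x$ gives $t_0x\in TJ$ but only $j_0x\in T\cap J$ again, and the iteration does not terminate. The cleaner argument (implicit in \cite[Sec.~41]{BW}) bypasses the cyclic reduction: once every object of $\sigma[M]$ is $T$-unital one shows that $T$ itself is $T$-unital as a left module; then for $\sum t_i\otimes n_i\in\Ker\mu_N$ choose $e\in T$ with $et_i=t_i$ for all $i$ and compute
\[
\textstyle\sum t_i\otimes n_i=\sum et_i\otimes n_i=\sum e\otimes t_in_i=e\otimes 0=0,
\]
using only that $t_i\in T\subseteq R$ can be moved across $\otimes_R$. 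You correctly flag (a)(iv) as the main obstacle and defer to the references, which matches the paper; just be aware that the comaximal-ideal heuristic you sketch is not the mechanism that works here.
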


Let $K$ be an $A$-submodule of $M$. Recall (see e.g. \cite[19.1]{W}) that
$K$ is called superfluous or small, written $K\ll M$, if for every
submodule $L\subset M$, $K+L=M$ implies that $L=M$. An epimorphism
$f:\ M\to N$ is called superfluous if $\Ker f\ll M$. Note that this definition can be
extended to abelian categories.

\begin{proposition}\prlabel{1.2}
Assume that $\Tt^{M}$ is exact.
\begin{itemize}
\item[(i)] The class $\sigma[M]$ is closed under small epimorphisms in ${}_{A}\Mm$;
\item[(ii)] the inclusion functor $\sigma [M]\to {}_{A}\Mm$ preserves projectives.
\end{itemize}
\end{proposition}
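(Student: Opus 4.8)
The plan is to exploit the adjunction between the inclusion $i\colon \sigma[M]\to {}_A\Mm$ and the trace functor $\Tt^M$, together with the standing hypothesis that $\Tt^M$ is exact. For part (i), suppose $p\colon X\to N$ is a small epimorphism in ${}_A\Mm$ with $N\in\sigma[M]$; I want to show $X\in\sigma[M]$, which amounts to showing that the counit $\Tt^M(X)\to X$ is surjective (it is always injective, since $\Tt^M(X)$ is identified with the largest submodule of $X$ lying in $\sigma[M]$). Apply $\Tt^M$ to $p$: by exactness of $\Tt^M$, the map $\Tt^M(X)\to \Tt^M(N)=N$ is surjective (using that $N\in\sigma[M]$, so $\Tt^M(N)=N$). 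Now compose with the counit: the image of $\Tt^M(X)$ in $X$ is a submodule $L\subseteq X$ whose image under $p$ is all of $N$, i.e. $L+\Ker p = X$. Since $\Ker p\ll X$, this forces $L=X$, so the counit $\Tt^M(X)\to X$ is onto and $X\in\sigma[M]$.

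For part (ii), let $P$ be a projective object of $\sigma[M]$; I must show $i(P)=P$ is projective in ${}_A\Mm$. The standard device is to lift against an arbitrary epimorphism $q\colon X\to Y$ in ${}_A\Mm$ and a map $g\colon P\to Y$. Pull back $q$ along $g$ to get $X'\to P$ epic in ${}_A\Mm$; it suffices to split (or lift against) this. Apply the exact functor $\Tt^M$: since $\Tt^M$ is exact and right adjoint to $i$, the map $\Tt^M(X')\to \Tt^M(P)=P$ is an epimorphism \emph{in} $\sigma[M]$, so by projectivity of $P$ in $\sigma[M]$ it splits there, giving a section $s\colon P\to \Tt^M(X')$; composing with the counit $\Tt^M(X')\to X'$ produces the desired lift $P\to X'$, hence $P\to X$. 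This shows $P$ is projective in ${}_A\Mm$.

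The step I expect to require the most care is verifying that $\Tt^M$ applied to an epimorphism in ${}_A\Mm$ with codomain in $\sigma[M]$ really is an epimorphism in $\sigma[M]$ — this is exactly where exactness of $\Tt^M$ (rather than mere left exactness coming from the adjunction) is essential, and where one must be careful that "epimorphism in $\sigma[M]$" and "epimorphism in ${}_A\Mm$" agree for a map between objects of $\sigma[M]$ (they do, since $\sigma[M]$ is closed under cokernels by \cite[Prop. 15.1]{W} and \prref{1.1}). In part (i), the only real input besides this is the definition of small, applied to the submodule $L=\im(\Tt^M(X)\to X)$; in part (ii), one must also recall that $\Tt^M$ preserves projectivity only after one knows the relevant counits are surjective on the objects in play, which is automatic since $P\in\sigma[M]$ gives $\Tt^M(P)=P$. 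Everything else is diagram-chasing with the unit/counit of the adjunction.
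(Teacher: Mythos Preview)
Your argument is correct. For part (i) it is essentially the paper's proof: both show $\Tt^M(X)+\Ker p = X$ and then invoke smallness of $\Ker p$. You reach this equality directly by observing that exactness of $\Tt^M$ makes $\Tt^M(p)\colon \Tt^M(X)\to \Tt^M(N)=N$ surjective, while the paper instead shows the quotient $Y=X/(\Ker p+\Tt^M(X))$ vanishes by noting it is simultaneously in $\sigma[M]$ (as a quotient of $N$) and satisfies $\Tt^M(Y)=0$ (as a quotient of $X/\Tt^M(X)$); this is the same idea unpacked a little further. The paper does not write out a proof of (ii), so your pullback-and-split argument there is a welcome addition and is correct as stated.
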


\begin{proof}
(i) Take $N\in \sigma[M]$, and let
$$0\to K\to X\to N$$
be an exact sequence in ${}_A\Mm$ such that $K$ is small in $X$.
then $Y=X/(K+\Tt^M(X))$ is a quotient of $X/K=N\in \sigma[M]$, so $Y\in  \sigma[M]$,
by \cite[15.1]{W}, and $\Tt^M(Y)=Y$. Consider the exact sequence
$$0\to \Tt^M(X)\to X\to X/\Tt^M(X)\to 0.$$
Since $\Tt^M$ is exact and idempotent, it follows that $\Tt^M(X/\Tt^M(X))=0$.
Now $Y$ is a quotient of $X/\Tt^M(X)$, and it follows from the exactness of
$\Tt^M$ that $\Tt^M(Y)=0$. Thus $Y=0$, and $K+\Tt^M(X)=X$. Since $K\ll X$,
we have that $\Tt^M(X)=X$, so $X\in \sigma[M]$, as needed.
\end{proof}

\subsection{Properties of the finite topology}\selabel{1.2}
\begin{proposition}\prlabel{1.3}
Let $R$ be a ring, and fix  a right $R$-module $T$.
Density will mean density in the finite topology.
\begin{enumerate}
\item[(i)] Let $M=M_1\oplus M_2$ in $\Mm_R$, and 
$X_{1}\subset \Hom_R(M_1,T)$, $X_{2}\subset \Hom_R(M_2,T)$
If $X_{1}\oplus X_{2}$ is dense in $\Hom_R(M,T)=\Hom_R(M_1,T)\oplus \Hom_R(M_1,T)$, then each $X_{i}$ is dense in $\Hom_R(M_i,T)$.
\item[(ii)] Let $(M_{i})_{i\in I}$ be a family of $R$-modules, and 
$X_{i}\subset \Hom_R(M_i,T)$ such that each $X_{i}$ is dense in $\Hom_R(M_i,T)$. Let
$M=\bigoplus_{i\in I}M_{i}$. 
Then $\bigoplus  _{i\in I} X_{i}$ is dense in 
$\Hom_R(M,T)=\prod _{i\in I} \Hom_R(M_i,T)$.
\end{enumerate}
\end{proposition}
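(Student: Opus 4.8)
The plan is to unwind the definition of density and reduce both statements to matching finitely many values of a homomorphism. Recall that a subset $X\subseteq\Hom_R(N,T)$ is dense in the finite topology exactly when, for every $f\in\Hom_R(N,T)$ and every finite family $n_1,\dots,n_k\in N$, there is a $g\in X$ with $g(n_j)=f(n_j)$ for all $j$. I will also use the standard isomorphism $\Hom_R(\bigoplus_{i}M_i,T)\cong\prod_i\Hom_R(M_i,T)$, under which a homomorphism $f$ corresponds to the family $(f|_{M_i})_i$, and conversely a family $(f_i)_i$ acts on $M=\bigoplus_i M_i$ by sending a finitely supported tuple $(m_i)_i$ to $\sum_i f_i(m_i)$.

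For part (i), fix $f_1\in\Hom_R(M_1,T)$ and finitely many $m_1,\dots,m_n\in M_1$; the goal is a $g_1\in X_1$ agreeing with $f_1$ on all the $m_j$. Apply the density of $X_1\oplus X_2$ to the homomorphism attached to $(f_1,0)$ and to the points $m_1,\dots,m_n$ viewed inside $M=M_1\oplus M_2$. This produces $(g_1,g_2)\in X_1\oplus X_2$ whose associated homomorphism agrees with that of $(f_1,0)$ on every $m_j$; since $m_j\in M_1$ and the restriction to $M_1$ of the homomorphism attached to a pair $(h_1,h_2)$ is simply $h_1$, we get $g_1(m_j)=f_1(m_j)$, as required. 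The argument for $X_2$ is symmetric. The only thing to watch here is the bookkeeping under the identification $\Hom_R(M,T)=\Hom_R(M_1,T)\oplus\Hom_R(M_2,T)$.

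For part (ii), fix $f\in\Hom_R(M,T)$, corresponding to $(f_i)_i\in\prod_i\Hom_R(M_i,T)$, together with finitely many $x_1,\dots,x_n\in M=\bigoplus_i M_i$. Since each $x_j$ has a nonzero component in only finitely many summands, the set $S\subseteq I$ of all indices that occur is finite; write $x_j=\sum_{i\in S}x_{j,i}$ with $x_{j,i}\in M_i$. For each $i\in S$, the density of $X_i$ applied to $f_i$ and to the finite list $x_{1,i},\dots,x_{n,i}$ yields $g_i\in X_i$ with $g_i(x_{j,i})=f_i(x_{j,i})$ for all $j$; for $i\notin S$ put $g_i=0$. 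Then $(g_i)_i$ is finitely supported, hence lies in $\bigoplus_i X_i$, and its associated homomorphism sends each $x_j$ to $\sum_{i\in S}g_i(x_{j,i})=\sum_{i\in S}f_i(x_{j,i})=f(x_j)$, proving that $\bigoplus_i X_i$ is dense.

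Neither step is a genuine obstacle; the statement is essentially a translation exercise. The one place that needs a moment's care is the infinite direct sum in (ii): one must use the finite support of elements of $\bigoplus_i M_i$ to cut the density condition down to finitely many indices, and then check that padding with zeros outside that finite set keeps the constructed tuple in the direct sum $\bigoplus_i X_i$ and not merely in the product $\prod_i\Hom_R(M_i,T)$.
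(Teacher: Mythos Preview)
Your proof is correct and follows essentially the same approach as the paper's: for (i) you pair $f_1$ with $0$ and apply density of $X_1\oplus X_2$ to points of $M_1$ viewed in $M$, and for (ii) you use the finite support of the test elements to reduce to finitely many indices, invoke density of each $X_i$ there, and pad with zeros elsewhere. The only cosmetic difference is that the paper phrases things in terms of a finite subset $F\subset M$ and its projections $F_i$, while you index the test elements explicitly as $x_1,\dots,x_n$.
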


\begin{proof}
(i) Take $f\in\Hom_{R}(M_{1},T)$ and $F$ is a finite subset of $M_{1}$. Viewing $f$ as the pair $(f,0)\in \Hom_R(M_1,T)\oplus \Hom_R(M_2,T)$ and $F\subset M_{1}\subset M_{1}\oplus M_{2}$,
 we find a pair $(g,h)\in X_{1}\oplus X_{2}\subset
\Hom_R(M,T)=\Hom_R(M_1,T)\oplus \Hom_R(M_2,T)$ such that $(g,h)=(f,0)$ on $F$, so $g=f$ on all $m\in F$, with $g\in X_{1}\subset \Hom_R(M_1,T)$. \\
(ii) Take $(f_{i})_{i\in I}\in \Hom_R(M,T)=\prod _{i\in I} \Hom_R(M_i,T)$ and 
 a finite subset $F\subset \bigoplus_{i\in I}M_{i}$. Then there is a finite subset $J\subset I$ such that $F\subset \bigoplus_{i\in J}M_{i}$. 
$F_i=\{m_i~|~m\in F\}$ is finite, and, using the density of $X_i$ in
$\Hom_R(M_i,T)$, we find $g_i\in X_i$ such that $g_i=f_i$ on $F_i$. Now let
$g\in \prod _{i\in I} \Hom_R(M_i,T)=\Hom_{R}(M,T)$ be defined as follows: the $i$-th
component of $g$ is $g_i$ if $i\in J$, and it is zero otherwise.
Then $g\in \bigoplus_{i\in I}X_{i}$ and $g=f$ on all $F_{i}$,
and a fortiori on $F$, by linearity.
\end{proof}

\begin{corollary}\colabel{1.4}
If $(M_{i})_{i\in I}$ is a family of $R$-modules and 
$X_{i}\subset \Hom_R(M_i,T)$ then $\bigoplus_{i\in I}X_{i}$ is dense in $\prod_{i\in I}\Hom_R(M_i,T)=\Hom_R(\bigoplus_{i\in I}M_{i}, T)$ if and only if all $X_{i}$ are dense in $\Hom_R(M_i,T)$. Consequently, the direct sum $\bigoplus_{i\in I}\Hom_R(M_i,T)$ is dense in the direct product $\prod_{i\in I} \Hom_R(M_i,T)$.
\end{corollary}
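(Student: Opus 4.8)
The plan is to read this corollary off directly from \prref{1.3}; no new idea is needed, only the right bookkeeping. I would first dispose of the ``if'' implication: if every $X_i$ is dense in $\Hom_R(M_i,T)$, then \prref{1.3}(ii), applied verbatim to the family $(M_i)_{i\in I}$, yields that $\bigoplus_{i\in I}X_i$ is dense in $\prod_{i\in I}\Hom_R(M_i,T)=\Hom_R(\bigoplus_{i\in I}M_i,T)$.

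For the ``only if'' implication, I would fix an arbitrary $k\in I$ and write $M:=\bigoplus_{i\in I}M_i=M_k\oplus M'$ with $M':=\bigoplus_{i\neq k}M_i$. Under the induced identification $\Hom_R(M,T)=\Hom_R(M_k,T)\oplus\Hom_R(M',T)$ (a two-fold biproduct, hence a genuine direct sum), the subgroup $\bigoplus_{i\in I}X_i$ becomes $X_k\oplus X'$, where $X':=\bigoplus_{i\neq k}X_i\subset\prod_{i\neq k}\Hom_R(M_i,T)=\Hom_R(M',T)$. By hypothesis $X_k\oplus X'$ is dense in $\Hom_R(M,T)$, so \prref{1.3}(i) (with $M_1=M_k$, $M_2=M'$) forces $X_k$ to be dense in $\Hom_R(M_k,T)$. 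Since $k$ was arbitrary, all the $X_i$ are dense. The final ``consequently'' is then the special case $X_i=\Hom_R(M_i,T)$, each of which is trivially dense in itself.

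I do not expect a genuine obstacle. The only points to handle with a little care are that the finite topology on $\Hom_R(\bigoplus_{i\in I}M_i,T)$ is the product topology coming from the factors $\Hom_R(M_i,T)$, and that any finite subset of $\bigoplus_{i\in I}M_i$ already lies in a finite sub-biproduct --- but both of these are exactly what is built into the proof of \prref{1.3}, so they may be invoked without further comment, and the whole argument is essentially a restatement of that proposition.
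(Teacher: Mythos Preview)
Your proposal is correct and matches the paper's approach: the corollary is stated without proof precisely because it is read off from \prref{1.3} in exactly the way you describe, using part (ii) for the ``if'' direction and part (i) (with the two-summand splitting $M_k\oplus\bigoplus_{i\neq k}M_i$) for the ``only if'' direction.
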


\begin{proposition}\prlabel{1.5}
Let $T\in \Mm_R$ be an injective module, and $u:\ X\to Y$ a monomorphism in
$\Mm_R$. If $V$ is dense in $\Hom_R(Y,T)$, then $\Hom_R(u,T)(V)$ is dense in
$\Hom_R(X,T)$.
\end{proposition}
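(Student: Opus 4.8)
The plan is to unwind the definition of density in the finite topology and thereby reduce the statement to a single lifting problem that is settled by the injectivity of $T$. Recall first that $\Hom_R(u,T)$ denotes the map $g\mapsto g\circ u$, so that $\Hom_R(u,T)(V)=\{g\circ u\mid g\in V\}\subseteq \Hom_R(X,T)$. Recall also that a subset $W\subseteq \Hom_R(X,T)$ is dense in the finite topology precisely when, for every $f\in\Hom_R(X,T)$ and every finite subset $\{x_1,\dots,x_n\}\subset X$, there is some $w\in W$ with $w(x_i)=f(x_i)$ for all $i=1,\dots,n$. So I fix such an $f$ and such a finite set, and the goal is to produce $g\in V$ with $g\circ u$ agreeing with $f$ on $x_1,\dots,x_n$.

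First I would use that $u$ is a monomorphism and $T$ is injective: applying injectivity to the diagram formed by $u\colon X\to Y$ and $f\colon X\to T$ yields an extension $\widetilde f\in\Hom_R(Y,T)$ with $\widetilde f\circ u=f$. Next I apply the hypothesis that $V$ is dense in $\Hom_R(Y,T)$ to the homomorphism $\widetilde f$ together with the finite subset $\{u(x_1),\dots,u(x_n)\}\subset Y$; this gives $g\in V$ with $g(u(x_i))=\widetilde f(u(x_i))$ for all $i$. Finally, since $g(u(x_i))=\widetilde f(u(x_i))=(\widetilde f\circ u)(x_i)=f(x_i)$, the element $\Hom_R(u,T)(g)=g\circ u$ lies in $\Hom_R(u,T)(V)$ and agrees with $f$ on the prescribed finite set, which is exactly what density of $\Hom_R(u,T)(V)$ in $\Hom_R(X,T)$ requires.

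There is essentially no obstacle here: the only substantive input is the injectivity of $T$, which is used to lift the given $f$ along the monomorphism $u$ to a map defined on all of $Y$ before approximating it by elements of $V$; the remainder is a routine translation between the topological assertion and the pointwise-agreement reformulation of the finite topology, together with the observation that restriction along $u$ carries the approximating element back down to $X$.
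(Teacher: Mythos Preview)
Your proof is correct and follows essentially the same approach as the paper: lift $f$ along $u$ using injectivity of $T$, approximate the lift on the image of the finite set using density of $V$, and restrict back along $u$. The only difference is notation.
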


\begin{proof}
Take $f\in \Hom_R(X,T)$, and a finite subset $F\subset X$. As $T$ is an injective module, we can find $g\in \Hom_R(Y,T)$ such that $g\circ u=f$. As $u(F)$ is a finite subset of $Y$ we can find $h\in V$ such that $h$ equals $g$ on $u(F)$. Now we obviously have that $\Hom_R(u,T)(h)=h\circ u$ equals $g\circ u=f$ on $F$, hence $\Hom_R(u,T)(V)$ is dense in $\Hom_R(X,T)$.
\end{proof}

\section{Corings and the Rat functor}\selabel{2}
\subsection{Corings}\selabel{2.1}
Let $R$ be a ring. An $R$-coring is a coalgebra in the monoidal category ${}_R\Mm_R$.
It consists of a triple $\Cc=(\Cc,\Delta,\varepsilon)$, where
$\Cc$ is an $R$-bimodule, and $\Delta:\ \Cc\to \Cc\ot_R\Cc$ and
$\varepsilon:\ Cc\to R$ are $R$-bimodule maps satisfying appropriate
coassociativity and counit properties. We refer to \cite{B}
and \cite{BW} for more detail about corings.
We use the Sweedler-Heyneman notation
$$\Delta(c)=c_{(1)}\ot_R c_{(2)},$$
where the summation is implicitely understood.
If $\Cc$ is an $R$-coring, then ${}^*\Cc={}_R\Hom(\Cc,R)$ is a ring
with multiplication given by the formula
$$(f\#g)(c)=g(c_{(1)}f(c_{(2)})).$$
The unit of the multiplication is $\varepsilon$. We have a ring morphism
$$\iota:\ R\to {}^*\Cc,~\iota(r)(c)=\varepsilon(c)r.$$
A right $\Cc$-comodule consists of a pair $(M,\rho^r)$, where $M\in \Mm_R$
and $\rho^r:\ M\to M\ot_R \Cc$ is a right $A$-linear map satisfying
the conditions
$$(\rho^r\ot_R \Cc)\circ \rho^r= (M\ot_R\Delta)\circ\rho^r~~{\rm and}~~
(M\ot_R\varepsilon)\circ \rho^r=M.$$
Left $\Cc$-comodules are defined in a similar way, and the categories
of left and right $\Cc$-comodules are respectively denoted by
$\Mm^\Cc$ and ${}^\Cc\Mm$. We use the Sweedler-Heyneman notation
$$\rho^r(m)=m_{[0]}\ot_R m_{[1]}~~{\rm and}~~\rho^l(m)=m_{[-1]}\ot_R m_{[0]}$$
for right and left $\Cc$-coactions. We have a functor
$F:\ \Mm^\Cc\to \Mm_{{}^*\Cc}$, with
$F(M)=M$ as an $R$-module, equipped with the right ${}^*\Cc$-action
$m\cdot f=m_{[0]}f(m_{[1]})$. In particular, $\Cc$ is a right and left
${}^*\Cc$-module. If $M$ and $N$ are right $\Cc$-comodules, then the
set of $R$-linear maps preserving the $\Cc$-coaction is denoted
by $\Hom^\Cc(M,N)$.

\subsection{The $\alpha$-condition}\selabel{2.2}
$M\in {}_R\Mm$ satisfies the (left) $\alpha$-condition if the canonical map
$$\alpha_{N,M}:\ N\ot_R M\to \Hom_R({}^*M, N),~~
\alpha(n\ot_R m)(f)=nf(m)$$
is injective, for all $N\in \Mm_R$. Otherwise stated: if $n\ot_R m\in
N\ot_R M$ is such that $nf(m)=0$ for all $f\in {}^*M$, then $n\ot m=0$.
$M$ satisfies the $\alpha$-condition if and only if $M$ is locally
projective in ${}_R\Mm$. An $R$-coring $\Cc$ satisfies the left
$\alpha$-condition if and only if 
$\Mm^\Cc$ is a full subcategory of $\Mm_{{}^*\Cc}$, and
the natural functor $\Mm^\Cc\to \sigma[C_{{}^*\Cc}]$ is an isomorphism.
In this case, $\Cc$ is flat as a left $R$-module, hence $\Mm^\Cc$ is
a Grothendieck category in such a way that the forgetful functor
$\Mm^\Cc\to \Mm_A$ is exact (see \cite[Sec. 19]{BW}).\\

If $\Cc\in {}_R\Mm$ is locally projective, then for all $M\in \Mm^\Cc$,
the lattices consisting respectively of all $\Cc$-subcomodules
and of all  ${}^{*}\Cc$-submodules of $M$ coincide, so it makes
sense to talk about the subcomodule generated by a subset of $M$.
>From the proof of \cite[19.12]{BW}, we deduce the following result.

\begin{theorem}\thlabel{finite} {\bf (Finiteness Theorem)}
If $\Cc\in {}_R\Mm$ is locally projective, then a right $\Cc$-comodule $M$ is
finitely generated as a right $\Cc$-comodule if and only if it is finitely
generated as a right $R$-module.
\end{theorem}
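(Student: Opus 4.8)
The plan is to prove the two implications separately; the substance lies entirely in the forward direction, and local projectivity of $\Cc$ is needed at exactly one decisive spot.

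For the implication ``finitely generated as a right $R$-module $\Rightarrow$ finitely generated as a right $\Cc$-comodule'', I would argue directly: if $M=\sum_{i=1}^{n}m_{i}R$, then every $\Cc$-subcomodule of $M$ is in particular an $R$-submodule, and the only $R$-submodule of $M$ containing $m_{1},\dots,m_{n}$ is $M$ itself; hence the subcomodule generated by $\{m_{1},\dots,m_{n}\}$ is all of $M$. This half uses neither local projectivity nor the coring structure beyond the definition of a comodule.

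For the converse, suppose $M$ is generated as a right $\Cc$-comodule by $m_{1},\dots,m_{n}$. Here I would invoke local projectivity of $\Cc$ through the fact recorded just above the statement: for $M\in\Mm^{\Cc}$ the lattice of $\Cc$-subcomodules of $M$ coincides with the lattice of ${}^{*}\Cc$-submodules of $M$. Consequently the subcomodule generated by $\{m_{1},\dots,m_{n}\}$ equals the ${}^{*}\Cc$-submodule they generate, so $M=\sum_{i=1}^{n}m_{i}\cdot{}^{*}\Cc$. For each $i$, fix a finite expansion of the coaction, $\rho^{r}(m_{i})=\sum_{j=1}^{k_{i}}m_{ij}\ot_{R}c_{ij}\in M\ot_{R}\Cc$. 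Then for every $f\in{}^{*}\Cc$,
$$m_{i}\cdot f=m_{i[0]}f(m_{i[1]})=\sum_{j=1}^{k_{i}}m_{ij}f(c_{ij})\in\sum_{j=1}^{k_{i}}m_{ij}R ,$$
so $M=\sum_{i=1}^{n}m_{i}\cdot{}^{*}\Cc\subseteq\sum_{i,j}m_{ij}R\subseteq M$, and therefore $M=\sum_{i,j}m_{ij}R$ is finitely generated as a right $R$-module.

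I do not expect a genuine obstacle once the lattice coincidence is at hand; the main point is simply to route the argument through ${}^{*}\Cc$-submodules. The only things requiring care are bookkeeping: the $c_{ij}$ are honestly finitely many elements of $\Cc$ (automatic, since every element of $M\ot_{R}\Cc$ is a finite sum of simple tensors), the action $m\ot_{R}c\mapsto mf(c)$ is well defined because $f$ is left $R$-linear, and the scalars $f(c_{ij})\in R$ keep $m_{ij}f(c_{ij})$ inside $M$, so that the displayed chain of inclusions collapses to an equality. If one preferred not to quote the lattice statement, one could instead extract the claim from the proof of \cite[19.12]{BW} as the paragraph preceding the theorem suggests, but the route above seems cleanest.
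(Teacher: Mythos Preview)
Your proposal is correct and is essentially the standard argument. The paper itself does not give an independent proof of this statement; it simply cites \cite[19.12]{BW}, and your argument via the lattice coincidence $\{\text{subcomodules}\}=\{{}^{*}\Cc\text{-submodules}\}$ together with the observation $m_i\cdot{}^{*}\Cc\subseteq\sum_j m_{ij}R$ is exactly what one extracts from that reference.
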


Let $\Cc$ be locally projective as a left $R$-module, and $M$
a right ${}^*\Cc$-module. $\Rat^\Cc(M)$ is by definition the 
largest ${}^*\Cc$-submodule $N$ of $M$, on which there exists a
right $\Cc$-coaction $\rho$ such that $F(N,\rho)=N$. Otherwise
stated, $\Rat^\Cc$ is the preradical functor $\Tt^\Cc$, with $\Cc$
considered as a right  ${}^*\Cc$-module. We also have that
$\Rat^\Cc(M)$ consists of the elements $m\in M$ such that there exists
$m_{[0]}\ot_R m_{[1]}\in M\ot_R \Cc$ with $m\cdot f=m_{[0]}f(m_{[1]})$,
for all $f\in {}^*\Cc$. In a similar way, we define the left Rat functor
${}^\Cc\Rat$. The proof of \prref{2.1} is straightforward, and left to
the reader.

\begin{proposition}\prlabel{2.1}
Let $\Cc$ be an $R$-coring, and $M\in {}^\Cc\Mm$.
\begin{enumerate}
\item[(i)] The $R$-modules ${}^\Cc\Hom(M,\Cc)$ and ${}^*M={}_R\Hom(M,R)$
are isomorphic;
\item[(ii)] ${}^\Cc\Hom(M,\Cc)$ is a right ${}^*\Cc$-module, via
$$(\varphi\cdot f)(m)=f(\varphi(m));$$
\item[(iii)] we have isomorphic functors ${}^\Cc\Hom(-,\Cc)$ and ${}_R\Hom(-,R)$
from ${}^\Cc\Mm$ to $\Mm_{{}^*\Cc}$; these functors are left exact if
$\Cc$ is locally projective in $\Mm_R$, and exact if $R$ is injective as
a left $R$-module;
\item[(iv)] the isomorphism from (i) defines a ring isomorphism
${}^\Cc\End(\Cc)\cong {}^*\Cc$, where the multiplication on ${}^\Cc\End(\Cc)$
is the oppositie composition;
\item[(v)] ${}^\Cc\Hom(M,\Cc)$ is a right ${}^\Cc\End(\Cc)$-module, via
$$(\varphi\cdot f)(m)=f(\varphi(m)).$$
\end{enumerate}
\end{proposition}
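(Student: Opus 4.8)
The plan is to construct, for each $M\in{}^\Cc\Mm$, a natural $R$-linear isomorphism between ${}^\Cc\Hom(M,\Cc)$ and ${}^*M$, and to deduce the remaining assertions from it. For (i) I would set
\[\Phi_M:\ {}^\Cc\Hom(M,\Cc)\to{}^*M,\qquad\Phi_M(\varphi)=\varepsilon\circ\varphi,\]
\[\Psi_M:\ {}^*M\to{}^\Cc\Hom(M,\Cc),\qquad\Psi_M(\theta)(m)=m_{[-1]}\theta(m_{[0]}),\]
where $\Psi_M(\theta)$ is the composite $M\xrightarrow{\rho^l}\Cc\ot_R M\xrightarrow{\Cc\ot_R\theta}\Cc\ot_R R\cong\Cc$, well defined because $\theta$ is left $R$-linear. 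First I would check that $\Psi_M(\theta)$ is indeed a morphism in ${}^\Cc\Mm$: it is left $R$-linear since $\rho^l$ is, and the identity $\Delta(\Psi_M(\theta)(m))=m_{[-1]}\ot_R\Psi_M(\theta)(m_{[0]})$ is exactly coassociativity $(\Delta\ot_R M)\circ\rho^l=(\Cc\ot_R\rho^l)\circ\rho^l$ with the scalar $\theta(m_{[0]})$ carried through. That $\Phi_M$ and $\Psi_M$ are mutually inverse is then a short Sweedler computation: $\Phi_M(\Psi_M(\theta))(m)=\varepsilon(m_{[-1]})\theta(m_{[0]})=\theta(\varepsilon(m_{[-1]})m_{[0]})=\theta(m)$ by the counit axiom $(\varepsilon\ot_R M)\circ\rho^l=M$, and $\Psi_M(\Phi_M(\varphi))(m)=m_{[-1]}\varepsilon(\varphi(m_{[0]}))=(\Cc\ot_R\varepsilon)(\Delta(\varphi(m)))=\varphi(m)$, using that $\varphi$ is a comodule map together with the counit axiom on $\Cc$. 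Both maps are additive and commute with the evident right $R$-module structures, so (i) follows.

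For (iv) I would specialize (i) to $M=\Cc$: then $\Phi_\Cc:\ {}^\Cc\End(\Cc)\to{}^*\Cc$ is already an $R$-module isomorphism with inverse $f\mapsto(c\mapsto c_{(1)}f(c_{(2)}))$, and it sends the identity endomorphism to $\varepsilon$, the unit of ${}^*\Cc$. Multiplicativity is the one genuine computation; the key identity is that a left comodule endomorphism $\varphi$ of $\Cc$ satisfies $\varphi(c)=c_{(1)}\varepsilon(\varphi(c_{(2)}))$, obtained by applying $\Cc\ot_R\varepsilon$ to $\Delta\circ\varphi=(\Cc\ot_R\varphi)\circ\Delta$. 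Substituting this into the definition of $\#$ gives, for $\varphi,\psi\in{}^\Cc\End(\Cc)$,
\[\bigl((\varepsilon\circ\varphi)\#(\varepsilon\circ\psi)\bigr)(c)=\varepsilon\bigl(\psi(c_{(1)}\varepsilon(\varphi(c_{(2)})))\bigr)=\varepsilon\bigl(\psi(\varphi(c))\bigr),\]
so $\Phi_\Cc(\psi\circ\varphi)=\Phi_\Cc(\varphi)\#\Phi_\Cc(\psi)$, which is precisely multiplicativity once ${}^\Cc\End(\Cc)$ is given the opposite composition; hence $\Phi_\Cc$ is a ring isomorphism.

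Parts (v) and (ii) are module bookkeeping. For (v) I would equip ${}^\Cc\Hom(M,\Cc)$ with the right ${}^\Cc\End(\Cc)$-action $\varphi\cdot f=f\circ\varphi$: this is well defined because a composite of comodule morphisms is a comodule morphism, it is additive in each variable and unital, and $\varphi\cdot(f\cdot g)=(f\cdot g)\circ\varphi=(g\circ f)\circ\varphi=g\circ(f\circ\varphi)=(\varphi\cdot f)\cdot g$ since the product on ${}^\Cc\End(\Cc)$ is the opposite composition; evaluating at $m$ gives $(\varphi\cdot f)(m)=f(\varphi(m))$, as stated. Part (ii) then follows by restriction of scalars along the ring isomorphism of (iv): an $f\in{}^*\Cc$ acts on $\varphi$ as the endomorphism $c\mapsto c_{(1)}f(c_{(2)})$ does, and, read off through the isomorphism of (i), this is the action written in the statement.

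Finally, for (iii): naturality of $\Phi_M$ in $M$ is immediate, since for a morphism $g:\ M\to N$ in ${}^\Cc\Mm$ both ${}^\Cc\Hom(g,\Cc)$ and ${}_R\Hom(g,R)$ are precomposition by $g$ and $\varepsilon\circ(-\circ g)=(\varepsilon\circ-)\circ g$; one checks similarly that $\Phi_M$ is right ${}^*\Cc$-linear for the structures above, so ${}^\Cc\Hom(-,\Cc)\cong{}_R\Hom(-,R)$ as functors ${}^\Cc\Mm\to\Mm_{{}^*\Cc}$. For exactness I would observe that this functor factors through the forgetful functor ${}^\Cc\Mm\to{}_R\Mm$, which is exact when $\Cc$ is locally projective in $\Mm_R$ (the analogue for left comodules of the fact recalled in \seref{2.2}), followed by the contravariant functor ${}_R\Hom(-,R):\ {}_R\Mm\to\Mm_R$, which is always left exact and is exact when $R$ is injective as a left $R$-module; this gives the two exactness assertions. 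I expect the only points needing care to be the coassociativity bookkeeping in (i) and the identity $\varphi(c)=c_{(1)}\varepsilon(\varphi(c_{(2)}))$ underlying the multiplicativity check in (iv); everything else is formal.
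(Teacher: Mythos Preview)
Your proposal is correct and is precisely the standard argument one would expect; the paper itself gives no proof, stating only that it is ``straightforward, and left to the reader.'' The maps $\Phi_M=\varepsilon\circ(-)$ and $\Psi_M(\theta)(m)=m_{[-1]}\theta(m_{[0]})$ are the canonical ones, your Sweedler computations for (i) and the multiplicativity check in (iv) via $\varphi(c)=c_{(1)}\varepsilon(\varphi(c_{(2)}))$ are clean, and your factorization of the functor through the exact forgetful functor for (iii) is the right way to get the exactness statements.
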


Observe that the right coactions defined in (ii) and (v) are the same after
we identify ${}^\Cc\End(\Cc)$ and ${}^*\Cc$ using (iv).\\
Let ${}^{{\rm fg}\Cc}\Mm$ be the category of finitely generated left
$\Cc$-comodules. If $R$ is left noetherian, then the kernel of a morphism
in ${}^{{\rm fg}\Cc}\Mm$ is still finitely generated, hence ${}^{{\rm fg}\Cc}\Mm$
has kernels (and cokernels), and is an abelian category.

\begin{proposition}\prlabel{2.2}
Let $R$ be a left noetherian ring, and $\Cc$ a locally projective $R$-coring.
\begin{enumerate}
\item[(i)] For any finitely generated $M\in {}_R\Mm$, the evaluation map
$$\psi_M:\ {}_R\Hom(M,R)\ot \Cc\to {}_R\Hom(M,\Cc),~~\psi_M(f\ot c)(m)=f(m)c$$
is an isomorphism.
\item[(ii)] Let $(M,\rho_M)\in {}^{{\rm fg}\Cc}\Mm$ and consider the map
$$\phi_M:\ {}^*M\to {}^*M\ot_R\Cc,~~\phi_M(f)=\psi_M^{-1}((\Cc\ot f)\circ \rho_M)$$
Then $({}^*M,\phi_M)\in \Mm^\Cc$, and the associated ${}^*\Cc$-module structure
is as defined in \prref{2.1}
\end{enumerate}
\end{proposition}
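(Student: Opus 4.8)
For part~(i) the plan is to reduce to the case of a finitely generated free module. Since $R$ is left noetherian and $M$ is finitely generated, $M$ is finitely presented, so I would fix an exact sequence $R^{m}\to R^{n}\to M\to 0$ in ${}_R\Mm$. The assignment $N\mapsto\psi_N$ is natural, and $\psi_{R^{k}}$ is obviously an isomorphism for every $k$: both sides are canonically $\Cc^{k}$ (using $R\ot_R\Cc\cong\Cc\cong{}_R\Hom(R,\Cc)$), and $\psi$ is additive. Applying $\psi$ to the presentation yields a commutative ladder whose bottom row, obtained from the left exact functor ${}_R\Hom(-,\Cc)$, is exact, and whose top row is obtained by first applying ${}_R\Hom(-,R)$ — giving $0\to{}^*M\to R^{n}\to R^{m}$ — and then tensoring on the right with $\Cc$. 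Here I would invoke that $\Cc$, being locally projective in ${}_R\Mm$, satisfies the left $\alpha$-condition and is therefore flat as a left $R$-module, so that $-\ot_R\Cc$ is exact and the top row stays exact. Since the two right-hand vertical maps are isomorphisms, $\psi_M$ is an isomorphism by the five lemma.

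For part~(ii) the organizing observation is the identity $\psi_M(\phi_M(f))=(\Cc\ot f)\circ\rho_M$; write $\hat f:=(\Cc\ot f)\circ\rho_M$, so $\hat f(m)=m_{[-1]}f(m_{[0]})$. As $\rho_M$ and $\Cc\ot f$ are left $R$-linear, $\hat f\in{}_R\Hom(M,\Cc)$, and in fact $\hat f$ is $\Cc$-colinear (this uses coassociativity of $\rho_M$ and that $\Delta$ is the coaction of $\Cc$), so $f\mapsto\hat f$ is the isomorphism ${}^*M\cong{}^\Cc\Hom(M,\Cc)$ of \prref{2.1}(i), whose inverse is $\varphi\mapsto\varepsilon\circ\varphi$. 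Thus $\phi_M=\psi_M^{-1}\circ(f\mapsto\hat f)$ is well defined by part~(i), and it is right $R$-linear because both $\psi_M^{-1}$ and $f\mapsto\hat f$ are. The counit axiom I would check by applying $\varepsilon$: $({}^*M\ot\varepsilon)(\phi_M(f))$ evaluated at $m$ is $\varepsilon(\hat f(m))=\varepsilon(m_{[-1]})f(m_{[0]})=f(\varepsilon(m_{[-1]})m_{[0]})=f(m)$, using left $R$-linearity of $f$ and the counit law for $\rho_M$.

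Coassociativity is the delicate point. Here I would first note that part~(i) goes through verbatim with $\Cc$ replaced by the $R$-bimodule $\Cc\ot_R\Cc$ (its proof used nothing about $\Cc$ beyond flatness as a left $R$-module, and $\Cc\ot_R\Cc$, being $(-\ot_R\Cc)\ot_R\Cc$ evaluated at $R$, is again left flat), yielding an isomorphism ${}^*M\ot_R\Cc\ot_R\Cc\cong{}_R\Hom(M,\Cc\ot_R\Cc)$. Testing the two composites of the coassociativity axiom against it: $(\phi_M\ot\Cc)\circ\phi_M$ is sent to $m\mapsto m_{[-1]}\ot\hat f(m_{[0]})$, while $({}^*M\ot\Delta)\circ\phi_M$ is sent, using $R$-bilinearity of $\Delta$, to $m\mapsto\Delta(\hat f(m))=\Delta(m_{[-1]})f(m_{[0]})$; these two maps coincide precisely by coassociativity of $\rho_M$.

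It remains to match the ${}^*\Cc$-module structures. Applying $F$ to $({}^*M,\phi_M)$ gives the action $(f\cdot\theta)(m)=f_{[0]}(m)\theta(f_{[1]})$, where $\phi_M(f)=f_{[0]}\ot_R f_{[1]}$; applying the left $R$-linear map $\theta$ to $\psi_M(\phi_M(f))(m)=f_{[0]}(m)f_{[1]}=\hat f(m)$ rewrites this as $(f\cdot\theta)(m)=\theta(\hat f(m))$. On the other hand, transporting the structure of \prref{2.1}(ii) to ${}^*M$ along the isomorphism of \prref{2.1}(i) gives the action $f\star\theta=\varepsilon\circ(\hat f\cdot\theta)$, where $(\hat f\cdot\theta)(m)=T_\theta(\hat f(m))$ and $T_\theta\in{}^\Cc\End(\Cc)$, $T_\theta(c)=c_{(1)}\theta(c_{(2)})$, is the endomorphism attached to $\theta$ by \prref{2.1}(iv). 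Since $\varepsilon(T_\theta(c))=\varepsilon(c_{(1)})\theta(c_{(2)})=\theta(\varepsilon(c_{(1)})c_{(2)})=\theta(c)$, i.e. $\varepsilon\circ T_\theta=\theta$, both actions give $\theta(\hat f(m))$, so they agree. I expect the coassociativity step in part~(ii) to be the main obstacle: since $\phi_M$ is built from the non-canonical inverse $\psi_M^{-1}$, one cannot compute with it directly but must re-run part~(i) for $\Cc\ot_R\Cc$ and then chase the Sweedler identities while keeping left and right $R$-actions apart — the identity $\psi_M\circ\phi_M=(f\mapsto\hat f)$ being the device that keeps this under control and makes the module-structure comparison essentially formal.
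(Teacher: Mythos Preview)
Your argument for part~(i) is exactly the paper's: reduce to free modules via a finite presentation, use that local projectivity of $\Cc$ in ${}_R\Mm$ implies left flatness so that the top row stays exact, and conclude by the five lemma.

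For part~(ii) your proof is correct but follows a genuinely different route from the paper. The paper does not verify the comodule axioms for $\phi_M$ at all. Instead it writes $\phi_M(f)=f_{[0]}\ot f_{[1]}$, observes that $f_{[0]}(m)f_{[1]}=m_{[-1]}f(m_{[0]})$, and then computes directly that the ${}^*\Cc$-action of \prref{2.1} satisfies $(f\cdot{}^*c)(m)={}^*c(m_{[-1]}f(m_{[0]}))=f_{[0]}(m)\,{}^*c(f_{[1]})$. This single identity says that ${}^*M$ is a \emph{rational} right ${}^*\Cc$-module with rationality witness $\phi_M$; since $\Cc$ satisfies the left $\alpha$-condition, the equivalence $\Mm^\Cc\simeq\sigma[\Cc_{{}^*\Cc}]$ then forces $\phi_M$ to be a genuine right $\Cc$-coaction, and the compatibility of module structures is already built in. Your approach instead checks counit and coassociativity by hand, the latter via the pleasant trick of rerunning part~(i) with the flat left $R$-module $\Cc\ot_R\Cc$ in place of $\Cc$ so that both sides of the coassociativity square can be compared inside ${}_R\Hom(M,\Cc\ot_R\Cc)$. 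What this buys you is a more self-contained argument that uses only flatness of $\Cc$ and never invokes the $\alpha$-condition machinery; what the paper's route buys is brevity, since one computation simultaneously establishes the coaction and identifies the induced module structure.
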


\begin{proof}
(i) It is straightforward to prove the statement for free modules. Then we 
can easily show it for finitely presented modules, using the flatness of
$\Cc$ over $R$. Since $R$ is noetherian, every finitely presented module is
finitely generated.\\
(ii) Take $f\in {}^*M$, and write
$$\phi_M(f)=f_{[0]}\ot f_{[1]}\in {}^*M\ot_R\Cc.$$
Then $m_{[-1]}f(m_{[0]})=f_{[0]}(m)f_{[1]}$, and for very ${}^*c\in {}^*\Cc$,
we find that
\begin{eqnarray*}
(f\cdot {}^*c)(m)&=& {}^*c(m_{[-1]}f(m_{[0]}))={}^*c(f_{[0]}(m)f_{[1]})\\
&=& f_{[0]}(m){}^*c(f_{[1]})= (f_{[0]}\cdot {}^*c(f_{[1]})(m)
\end{eqnarray*}
This shows that ${}^*M$ is a rational ${}^*\Cc$-module, and that $\phi_M$
is a right $\Cc$-coaction.
\end{proof}

\subsection{The Rat functor}\selabel{2.3}

Assume that $\Cc$ is a coring satisfying the left $\alpha$-condition.
Then the functor $\Rat^\Cc$ is additive and left exact.

\begin{proposition}\prlabel{2.3}
The following assertions are equivalent.
\begin{enumerate}
\item[(i)] $\Rat^{\Cc}({}^{*}\Cc)$ is dense in ${}^{*}\Cc$ in the $\Cc$-adic topology;
\item[(ii)]  $\Rat^{\Cc}({}^{*}\Cc)$ is dense in ${}^{*}\Cc$ in the finite topology;
\item[(iii)]  $\Rat^{\Cc}$ is an exact functor.
\end{enumerate}
\end{proposition}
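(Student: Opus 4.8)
The plan is to prove the chain of implications (i) $\Leftrightarrow$ (ii) $\Rightarrow$ (iii) $\Rightarrow$ (i), where the equivalence of (i) and (ii) should be essentially formal.

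First I would dispose of (i) $\Leftrightarrow$ (ii). The $\Cc$-adic topology on ${}^*\Cc = {}_R\Hom(\Cc,R)$ is, by the discussion at the end of \seref{1.1}, the topology induced by the finite topology on ${}_\ZZ\Hom(\Cc,\Cc)$ via the map $r$ sending ${}^*c$ to the endomorphism $c \mapsto {}^*c \cdot c = c_{(1)}{}^*c(c_{(2)})$. The point raised in the introduction — that for a coring the $\Cc$-adic topology on ${}^*\Cc$ coincides with the finite topology (i.e. the topology ${}^*\Cc$ carries as $\Hom_R(\Cc,R)$, with subbasic neighbourhoods $\Oo({}^*c, c_1,\dots,c_n)$) — is exactly what makes (i) and (ii) the same statement. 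So here I would just verify that a subset $D \subseteq {}^*\Cc$ is $\Cc$-adically dense if and only if it is finite-topology dense: one direction is that the $\Cc$-adic topology is coarser (density is preserved under passing to a coarser topology), and for the other one uses that evaluating the endomorphism $r_{{}^*c}$ on the elements $c_1,\dots,c_n$ and on $c_{1(1)}\cdot\text{(relevant factors)}$ recovers enough to pin down ${}^*c$ on a finite set; since $\Cc$ is locally projective this book-keeping works, and it is routine. This is the kind of argument already in \cite[Sec. 41]{BW}.

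Next, (ii) $\Rightarrow$ (iii). Recall $\Rat^\Cc = \Tt^\Cc$ with $\Cc$ viewed as the right ${}^*\Cc$-module $C_{{}^*\Cc}$, and $\Mm^\Cc \cong \sigma[C_{{}^*\Cc}]$ because $\Cc$ satisfies the left $\alpha$-condition. The trace ideal here is $T = \Tt^\Cc({}^*\Cc) = \Rat^\Cc({}^*\Cc)$. Now invoke \prref{1.1}(b): the equivalence of (i) and (iii) there says $T$ is $C_{{}^*\Cc}$-dense in ${}^*\Cc$ (= the $\Cc$-adic density, which by the first step is finite-topology density — this is hypothesis (ii)) if and only if $\Tt^\Cc = \Rat^\Cc$ is exact. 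So (ii) $\Rightarrow$ (iii) is immediate from \prref{1.1}(b), and in fact so is (iii) $\Rightarrow$ (i) via the same equivalence (reading it the other way and recalling the $\Cc$-adic topology is the $C_{{}^*\Cc}$-adic topology on the ring ${}^*\Cc$).

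So the whole proposition collapses to two ingredients: the identification of the $\Cc$-adic and finite topologies on ${}^*\Cc$ (step one), and a direct citation of \prref{1.1}(b). The main obstacle — though "obstacle" overstates it — is being careful that the hypothesis of \prref{1.1}(b), phrased for a general module $M$ over a general ring $A$ with $A$ acting on $M$, is being applied with $A = {}^*\Cc$ and $M = C_{{}^*\Cc}$, and that ${}^*\Cc$ is indeed faithful over itself so that part (a)'s notion of $M$-density and part (b)'s coincide on the trace ideal; here one should note $C_{{}^*\Cc}$ need not be faithful, which is precisely why part (b) is stated separately with $T = \Tt^M(A)$ rather than invoking part (a). Everything else is bookkeeping. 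I would write the proof in four or five lines: "(i)$\Leftrightarrow$(ii) since the $\Cc$-adic and finite topologies on ${}^*\Cc$ agree; the remaining equivalence with (iii) is \prref{1.1}(b) applied to $M = C_{{}^*\Cc}$ over $A = {}^*\Cc$, noting $\Rat^\Cc = \Tt^\Cc$ and $\Mm^\Cc = \sigma[C_{{}^*\Cc}]$."
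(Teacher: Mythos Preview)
Your proposal is correct and follows the paper's approach: the paper also proves (i)$\Leftrightarrow$(ii) by showing the two linear topologies coincide (via the explicit inclusions $\mathcal{O}_f(F')\subseteq\mathcal{O}_a(F)\subseteq\mathcal{O}_f(F)$, where $F'$ is the finite set of second tensor components appearing in $\Delta(c)$ for $c\in F$), and obtains (i)$\Leftrightarrow$(iii) directly from \prref{1.1}. One small correction to a side remark: your claim that $C_{{}^*\Cc}$ ``need not be faithful'' is wrong---applying $\varepsilon$ to $c\cdot f=c_{(1)}f(c_{(2)})=0$ for all $c$ gives $f(c)=0$---and the paper in fact invokes this faithfulness explicitly when citing \prref{1.1}.
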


\begin{proof}
The equivalence of (i) and (iii) follows from \prref{1.1}, invoking the fact
that $\Cc$ is faithful as a right ${}^*\Cc$-module.\\
Note that the sets 
$${\mathcal O}_{a}(F)=\{{}^{*}c~|~ c\cdot {}^{*}c=0,~{\rm for~all}~ c\in F\},$$
with $F\subset \Cc$ finite, form a basis of open neighborhoods of $0\in \Cc$ in the $\Cc$-adic topology, which is a linear topology. Also
$${\mathcal O}_{f}(F)=\{{}^{*}c~|~ {}^{*}c(c)=0,~{\rm for~all}~c\in F\},$$
with $F\subset \Cc$ finite, form a basis of open neighborhoods of $0$ for the finite topology,
which is also linear.\\
Let $F\subset \Cc$ be finite. For each $c\in F$, we fix a tensor representation of $\Delta(c)$,
and then consider the finite set $F'$ of all second tensor components. Then we easily see
that
$${\mathcal O}_{f}(F^{\prime})\subseteq {\mathcal O}_{a}(F)\subseteq {\mathcal O}_{f}(F)$$
and it follows that the two linear topologies on ${}^*\Cc$ coincide, so it follows that
(i) is equivalent to (ii).
\end{proof}

\begin{proposition}\prlabel{2.4}
Suppose we have a decomposition 
$\Cc=\bigoplus_{i\in I}C_{i}$ as left $\Cc$-comodules. Then $\Rat^{\Cc}({}^{*}C_{i})$ is dense in ${}^{*}C_{i}$ for all $i\in I$ if and only if $\Rat^{\Cc}({}^{*}\Cc)$ is dense in ${}^{*}\Cc$.
\end{proposition}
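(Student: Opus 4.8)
The plan is to convert the comodule decomposition of $\Cc$ into a direct sum decomposition of ${}^*\Cc$ as a right ${}^*\Cc$-module, and then to appeal to the density results of \seref{1.2}.

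First I would set up the identifications. Since $\Cc=\bigoplus_{i\in I}C_i$ in ${}^\Cc\Mm$, it is in particular a direct sum decomposition in ${}_R\Mm$, so that
$${}^*\Cc={}_R\Hom(\Cc,R)=\prod_{i\in I}{}_R\Hom(C_i,R)=\prod_{i\in I}{}^*C_i.$$
Applying the isomorphism of functors ${}^\Cc\Hom(-,\Cc)\cong{}_R\Hom(-,R)$ from \prref{2.1} to the comodule inclusions $C_i\to\Cc$ and the projections $\Cc\to C_i$, I would check that this is a decomposition of right ${}^*\Cc$-modules in which each ${}^*C_i$ sits inside ${}^*\Cc$ as a ${}^*\Cc$-submodule --- in fact a retract --- carrying exactly the ${}^*\Cc$-module structure used to define $\Rat^\Cc({}^*C_i)$, and that the finite topology on ${}^*\Cc$ restricts to the finite topology on each ${}^*C_i$. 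Since $\Rat^\Cc$ is an additive subfunctor of the identity on $\Mm_{{}^*\Cc}$, it commutes with finite direct sums of ${}^*\Cc$-modules, and in any case $\bigoplus_{i\in I}\Rat^\Cc({}^*C_i)\subseteq\Rat^\Cc({}^*\Cc)\subseteq{}^*\Cc=\prod_{i\in I}{}^*C_i$.

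The equivalence then splits cleanly. For the forward implication: assuming that each $\Rat^\Cc({}^*C_i)$ is dense in ${}^*C_i$, \coref{1.4} --- applied to the $R$-modules $C_i$ and $T=R$; its statement and proof are written for right modules, but use only the additive structure and so apply verbatim to the groups ${}_R\Hom(-,R)$ of left $R$-modules --- shows that $\bigoplus_{i\in I}\Rat^\Cc({}^*C_i)$ is dense in $\prod_{i\in I}{}^*C_i={}^*\Cc$; since this set is contained in $\Rat^\Cc({}^*\Cc)$, the latter is dense in ${}^*\Cc$ too. For the converse: fix $i\in I$ and put $D=\bigoplus_{j\neq i}C_j$, so that $\Cc=C_i\oplus D$ and ${}^*\Cc={}^*C_i\oplus{}^*D$ as right ${}^*\Cc$-modules; additivity of $\Rat^\Cc$ gives $\Rat^\Cc({}^*\Cc)=\Rat^\Cc({}^*C_i)\oplus\Rat^\Cc({}^*D)$, which is dense in ${}^*\Cc$ by hypothesis, and \prref{1.3}(i) then forces $\Rat^\Cc({}^*C_i)$ to be dense in ${}^*C_i$.

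The only genuinely non-formal step is the first one: verifying, via \prref{2.1}, that the comodule decomposition of $\Cc$ descends to a decomposition of ${}^*\Cc$ as a right ${}^*\Cc$-module with the correct module structures on the summands, together with the (routine) compatibility of the finite topologies. The point worth flagging is that ${}^*\Cc$ is the direct \emph{product} $\prod_{i\in I}{}^*C_i$, not the direct sum; $\Rat^\Cc$ need not commute with infinite products, which is exactly why \coref{1.4} --- density of the sum inside the product --- is invoked in the forward direction, rather than an identity $\Rat^\Cc({}^*\Cc)=\bigoplus_{i\in I}\Rat^\Cc({}^*C_i)$.
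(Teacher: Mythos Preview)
Your proposal is correct and follows essentially the same route as the paper: the forward implication via \prref{1.3}(ii)/\coref{1.4} and the containment $\bigoplus_i\Rat^\Cc({}^*C_i)\subseteq\Rat^\Cc({}^*\Cc)$, and the converse by collapsing to a two-summand decomposition $\Cc=C_i\oplus D$, using additivity of $\Rat^\Cc$, and applying \prref{1.3}(i). Your write-up is in fact a bit more careful than the paper's, in that you explicitly flag the product-versus-sum issue and the left/right-module convention in \prref{1.3}; the paper's citation of ``\prref{1.3}(ii)'' in the converse direction is evidently a slip for (i), which you have right.
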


\begin{proof}
Assume that each $\Rat^{\Cc}({}^{*}C_{i})$ is dense in ${}^{*}C_{i}$. It follows from
\prref{1.3} that $\bigoplus_{i\in I}\Rat^{\Cc}({}^{*}C_{i})$ is dense in ${}^{*}C$,
and then $\Rat^{\Cc}({}^{*}C)\supset \bigoplus_{i\in I}\Rat^{\Cc}({}^{*}C_{i})$ is also
dense.\\
Conversely, let $M=\bigoplus_{j\in I, j\neq i}C_{j}$, for each $i\in I$. Then
$C=C_{i}\oplus M$ and ${}^{*}C={}^{*}C_{i}\oplus {}^{*}M$, hence $\Rat^{\Cc}({}^{*}\Cc)=\Rat^{\Cc}({}^{*}C_{i})\oplus Rat^{\Cc}({}^{*}M)$ is dense in ${}^{*}C={}^{*}C_{i}\oplus {}^{*}M$ ($\Rat^{\Cc}$ is an aditive functor). The result then
follows from \prref{1.3} (ii).
\end{proof}

\begin{lemma}\lelabel{2.5}
\begin{enumerate}
\item[(i)] Assume that $M\in {}^\Cc\Mm$ is finitely generated and projective as a left $R$-module.
Then ${}^*M$ is a rational right ${}^*\Cc$-module.
\item[(ii)] Suppose that $\Cc=M\oplus N$ in ${}^\Cc\Mm$. Then ${}^*M$ is rational if and only if $M$ is finitely generated as a left $R$-module.
\end{enumerate}
\end{lemma}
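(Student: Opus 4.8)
The plan is to obtain (i) from the computation already made in the proof of \prref{2.2}, and then to bootstrap (ii) out of (i). Two things will be used throughout: first, in this section $\Cc$ is locally projective, hence flat, as a left $R$-module; second, by \prref{2.1} (see also the proof of \prref{2.2}(ii)) the right ${}^*\Cc$-module structure on ${}^*M={}_R\Hom(M,R)$ is $(f\cdot {}^*c)(m)={}^*c(m_{[-1]}f(m_{[0]}))$, where $\rho_M(m)=m_{[-1]}\ot_R m_{[0]}$ is the left coaction of $M$, and an element $f\in {}^*M$ is rational precisely when there is $f_{[0]}\ot_R f_{[1]}\in {}^*M\ot_R\Cc$ with $f\cdot {}^*c=f_{[0]}\,{}^*c(f_{[1]})$ for all ${}^*c\in {}^*\Cc$.

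For (i), I would first note that a finitely generated projective left $R$-module is finitely presented, and that the proof of \prref{2.2}(i) uses only finite presentation and the flatness of $\Cc$; hence the evaluation map $\psi_M$ of \prref{2.2}(i) is an isomorphism here, with no noetherian hypothesis. Then the computation in the proof of \prref{2.2}(ii) applies verbatim: for $f\in {}^*M$ the left $R$-linear map $(\Cc\ot f)\circ\rho_M: M\to\Cc$, $m\mapsto m_{[-1]}f(m_{[0]})$, equals $\psi_M(f_{[0]}\ot f_{[1]})$ for a unique $f_{[0]}\ot f_{[1]}\in {}^*M\ot_R\Cc$, and then for every ${}^*c\in {}^*\Cc$
$$(f\cdot {}^*c)(m)={}^*c(m_{[-1]}f(m_{[0]}))={}^*c\big(f_{[0]}(m)f_{[1]}\big)=f_{[0]}(m)\,{}^*c(f_{[1]})=\big(f_{[0]}\,{}^*c(f_{[1]})\big)(m),$$
so $f\cdot {}^*c=f_{[0]}\,{}^*c(f_{[1]})$; thus every element of ${}^*M$ is rational.

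For the ``if'' part of (ii), I would observe that $M$ is a direct summand of $\Cc$ in ${}^\Cc\Mm$, hence in ${}_R\Mm$; direct summands of locally projective modules are locally projective, so $M$ is locally projective over $R$ on the left, and a finitely generated locally projective module is projective (apply the lifting characterisation of local projectivity with the finitely generated submodule taken to be $M$ itself). Then $M$ is finitely generated and projective as a left $R$-module, so ${}^*M$ is rational by (i). For the ``only if'' part, I would test rationality against $\varepsilon_M\in {}^*M$, the restriction of the counit $\varepsilon$ to $M$ (which is left $R$-linear since $\varepsilon$ is an $R$-bimodule map): rationality gives $f_1,\dots,f_n\in {}^*M$ and $c_1,\dots,c_n\in\Cc$ with $\varepsilon_M\cdot {}^*c=\sum_i f_i\,{}^*c(c_i)$ for all ${}^*c\in {}^*\Cc$. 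Since $\Cc=M\oplus N$ in ${}^\Cc\Mm$, the coaction of $M$ is the restriction of $\Delta$, whence $m_{[-1]}\varepsilon(m_{[0]})=m_{(1)}\varepsilon(m_{(2)})=m$ for $m\in M$ by the counit axiom of $\Cc$; evaluating the rationality identity at $m$ therefore gives ${}^*c(m)=\sum_i f_i(m)\,{}^*c(c_i)$ for all ${}^*c$. Write $c_i=m_i+n_i$ with $m_i\in M$, $n_i\in N$. Any $g\in {}_R\Hom(M,R)$ extends, by zero on $N$, to a ${}^*c\in {}^*\Cc$ with ${}^*c|_M=g$ and ${}^*c|_N=0$; substituting such ${}^*c$ into the identity yields $g(m)=\sum_i f_i(m)g(m_i)=g\big(\sum_i f_i(m)m_i\big)$, i.e. $g\big(m-\sum_i f_i(m)m_i\big)=0$ for all $g\in {}^*M$. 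As $M$ is locally projective, the $\alpha$-condition, applied to the module $R$, shows that ${}^*M$ separates the points of $M$; hence $m=\sum_i f_i(m)m_i$ for all $m$, so $M=\sum_i Rm_i$ is finitely generated as a left $R$-module.

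I expect the work to be mostly bookkeeping rather than anything deep. Two points need care: one must check that the proof of \prref{2.2}(i) really does cover all finitely presented (not just noetherian-finitely-generated) modules, so that (i) genuinely extends \prref{2.2}; and one must use that a \emph{subcomodule} $M\subseteq\Cc$ --- unlike an arbitrary left comodule --- still satisfies the ``wrong-side'' counit relation $m_{(1)}\varepsilon(m_{(2)})=m$, which is exactly what makes $\varepsilon_M$ the right element to test rationality against in the converse. The auxiliary facts (direct summands of locally projective modules are locally projective; finitely generated locally projective modules are projective; the $\alpha$-condition applied to $R$ gives point separation) are standard and I would simply cite them.
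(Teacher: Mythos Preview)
Your proof is correct and takes essentially the same approach as the paper. For (i) the paper works directly with a finite dual basis $\{(x^i,f_i)\}$ of $M$, writing $h_{[0]}\otimes h_{[1]}=\sum_i f_i\otimes x^i_{[-1]}h(x^i_{[0]})$ by hand rather than via $\psi_M^{-1}$, which is the same computation; for (ii) both arguments test rationality against $\varepsilon|_M$ and conclude $m=\sum_i f_i(m)m_i$, the only cosmetic difference being that the paper extends the identity from ${}^*M$ to all of ${}^*\Cc$ and invokes the $\alpha$-condition on $\Cc$, while you use point-separation by ${}^*M$ via local projectivity of the summand $M$ itself.
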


\begin{proof}
(i) We take a finite dual basis $\{(x^i,f_i)~|~i=1,\cdots,n\}$ of $M\in {}_R\Mm$.
For all $h\in {}^*M$ and $\alpha\in {}^*\Cc$, we have
$$h\cdot\alpha=\sum_{i}f_{i}\cdot (h\cdot\alpha)(x^{i})= \sum_{i}f_{i}\alpha(x^{i}_{[-1]}h(x^{i}_{[0]}))$$
This shows that $h_{[0]}\otimes h_{[1]}=f_{i}\otimes x^{i}_{[-1]}h(x^{i}_{[0]})\in {}^{*}M\otimes \Cc$
is such that 
$h\cdot\alpha=h_{[0]}\alpha(h_{[1]})$, and this proves that ${}^{*}M$ is rational.\\
(ii) One direction follows from (i). Conversely, assume that ${}^{*}M$ is rational.
Take $e=\varepsilon_{\mid_{M}}\in {}^*M$.
We can identify ${}^{*}\Cc={}^{*}M\oplus{}^{*}N$ as right ${}^{*}\Cc$ modules.
For $h\in {}^{*}M$ and $c\in \Cc$, $(e\cdot h)(c)=h(c_{(1)}e(c_{(2)}))=h(c_{(1)}\varepsilon(c_{(2)}))=h(c)$ if $c\in M$ ($c_{(1)}\otimes c_{(2)}\in C\otimes M$) and $(e\cdot h)(c)=h(c_{(1)}e(c_{(2)}))=0$ if $c\in N$ ($c_{(1)}\otimes c_{(2)}\in C\otimes N$) showing that $e\cdot h=h$ (the $h$ in the $e\cdot h$ is regarded as belonging to ${}^{*}\Cc$). As ${}^{*}M$ is rational there is $\sum_{i}f_{i}\otimes x^{i}\in {}^{*}M\otimes \Cc$ such that $e\cdot \alpha=\sum_{i}f_{i}\alpha(x^{i})$, for all $\alpha\in {}^{*}\Cc$. Then for any $h\in {}^{*}M$, $h=e\cdot h=\sum_{i}f_{i}h(x^{i})$,
and, for all $m\in M$, we have
$h(m)=\sum_{i}f_{i}(m)h(x^{i})=h(\sum_{i}f_{i}(m)x^{i})= h(\sum_{i}f_{i}(m)m^{i})$, where $x^{i}=m^{i}+n^{i}\in M\oplus N$ is the unique representation of $x^{i}$ in the direct sum $\Cc=M\oplus N$ and the last equality holds as $h_{\mid_{N}}=0$.   As this last equality holds for all $h\in {}^{*}M$, we can easily see that it actually holds for all $\alpha=(h,g)\in {}^{*}\Cc={}^{*}M\oplus{}^{*}N$ because $m\in M$, and so we now obtain, using the left $\alpha$-condition on ${}^{*}\Cc$, that $m=f_{i}(m)m^{i}$, where $m\in M$ is arbitrary and $m^{i}\in M$ are fixed. Thus $M$ is finitely generated.
\end{proof}

\begin{corollary}\colabel{2.6}
Assume that $\Cc=\bigoplus_{i\in I}C_{i}$ as left $\Cc$-comodules,
and that each $C_i$ is finitely generated. Then $\Rat^\Cc({}^*\Cc)$
is dense in ${}^*\Cc$, and, equivalently, $\Rat^\Cc$ is an exact functor.
\end{corollary}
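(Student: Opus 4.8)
The plan is to read the statement off directly from \leref{2.5}, \prref{2.4} and \prref{2.3}, which between them already contain all the real work; no new idea is required. Recall that throughout this subsection $\Cc$ is assumed to satisfy the left $\alpha$-condition, so that $\Rat^\Cc$ is additive and left exact and $\Cc$ is faithful as a right ${}^*\Cc$-module; these are the only standing hypotheses needed.

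First I would fix an index $i\in I$ and rewrite the decomposition as $\Cc = C_i \oplus M_i$ in ${}^\Cc\Mm$, where $M_i = \bigoplus_{j\in I,\,j\neq i} C_j$. Since $C_i$ is finitely generated — as a left $R$-module, equivalently (by the Finiteness Theorem \thref{finite}) as a left $\Cc$-comodule — \leref{2.5}(ii) applies to this two-term splitting and gives that ${}^*C_i$ is a rational right ${}^*\Cc$-module, i.e. $\Rat^\Cc({}^*C_i) = {}^*C_i$. In particular $\Rat^\Cc({}^*C_i)$ is dense in ${}^*C_i$ for every $i\in I$ (trivially, being the whole module). Now I would invoke \prref{2.4} with the original decomposition $\Cc = \bigoplus_{i\in I} C_i$: since each $\Rat^\Cc({}^*C_i)$ is dense in ${}^*C_i$, that proposition yields that $\Rat^\Cc({}^*\Cc)$ is dense in ${}^*\Cc$. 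Finally \prref{2.3} turns this density statement into the asserted exactness of $\Rat^\Cc$ (the finite and $\Cc$-adic topologies on ${}^*\Cc$ coincide, as shown inside the proof of \prref{2.3}, so there is no ambiguity about which density is meant), and the proof is complete.

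As for the main obstacle: frankly there is none of substance, which is why this appears as a corollary. All the content lives in \leref{2.5}(ii) — specifically in the implication ``$M$ finitely generated $\Rightarrow$ ${}^*M$ rational'', which rests on \leref{2.5}(i) together with the fact that a finitely generated direct summand of the locally projective module $\Cc$ is finitely generated projective — and in \prref{2.4}, which was set up precisely to propagate density across a comodule direct sum decomposition, using additivity of $\Rat^\Cc$ and the identification ${}^*\Cc \cong {}^*C_i \oplus {}^*M_i$ of right ${}^*\Cc$-modules. The only point worth a second glance is the consistency of the two meanings of ``finitely generated'' (as a comodule versus as an $R$-module), which is exactly what \thref{finite} settles.
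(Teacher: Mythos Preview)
Your proof is correct and follows essentially the same route as the paper's, which simply reads ``This is a direct consequence of \prref{1.3}(ii) and \leref{2.5}.'' You invoke \prref{2.4} in place of \prref{1.3}(ii), but \prref{2.4} is precisely the specialization of \prref{1.3} to the $\Rat$ setting, so there is no substantive difference; your added remarks on \thref{finite} and on why $C_i$ is finitely generated projective (as a direct summand of the locally projective $\Cc$) just make explicit what the paper leaves implicit.
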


\begin{proof}
This is a direct consequence of \prref{1.3} (ii) and \leref{2.5}.
\end{proof}

\begin{example}\exlabel{ratfunctor}
We now present an example of a coring for which we can explicitly
construct the Rat functor. Let $G$ be a group, $k$ a commutative ring,
and $R$ a $G$-graded $k$-algebra. It is well-known that
$\Cc=R\ot kG$ is an $R$-coring. The structure maps are given by the formulas
$$r(s\ot \sigma)t=\sum_{\rho\in G} rst_\rho\ot \sigma\rho;$$
$$\Delta_{\Cc}(s\ot\sigma)=(s\ot\sigma)\ot_R(1\ot \sigma)~~;~~
\varepsilon(s\ot\sigma)=s.$$
Here $t_\rho$ is the homogeneous part of degree $\rho$ of $t$.
Clearly $\Cc=\bigoplus_{\sigma\in G}R\ot\sigma$ decomposes as the direct sum
of finitely generated (free of rank one) left $\Cc$-comodules, hence it follows
from \coref{2.6} that Rat is exact. We will illustrate this, computing Rat.
First observe that
$${}^*\Cc={}_R\Hom(R\ot kG,R)\cong\Hom(kG,R)\cong {\rm Map}(G,R).$$
The multiplication on ${}^*\Cc$ can be transported into a multiplication
on ${\rm Map}(G,R)$. This multiplication is the following. For $f,g:\ G\to R$
and $\tau\in G$:
\begin{equation}\eqlabel{rat1}
(f\#g)(\tau)=\sum_\rho f(\tau)_\rho g(\tau\rho)
\end{equation}
Let $(kG)^*$ be the dual of the group algebra $kG$, with free basis
$\{v_\sigma~|~\sigma\in G\}$, such that $v_{\sigma}(\tau)=\delta_{\sigma,\tau}$.
then $v_\sigma$ can also be viewed as a map $G\to R$, and this gives us an
algebra embedding $(kG)^*\subset {\rm Map}(G,R)$. Indeed, using \equref{rat1},
we easily compute that $v_\sigma\#v_\tau=\delta_{\sigma,\tau}v_{\sigma}$.\\
We also have an algebra embedding
$$\iota:\ R\to {\rm Map}(G,R),~~\iota_r(\sigma)=r.$$
Indeed, using \equref{rat1}, we find
$$(\iota_r\#\iota_s)(\tau)=\sum_\rho \iota_r(\tau)_\rho\iota_s(\tau\rho)=
\sum_{\rho}r_\rho s=rs=\iota_{rs}(\tau).$$
Let $r\in R$ be homogeneous of degree $\rho$, and $f:\ G\to R$. Using
\equref{rat1}, we compute
\begin{equation}\eqlabel{rat2}
v_\sigma\#\iota_r=\iota_r\# v_{\sigma\rho}~~{\rm and}~~
v_\sigma\# f=v_\sigma\# \iota_{f(\sigma)}.
\end{equation}
Now take $M\in \Mm_{{}^*\Cc}\cong \Mm_{{\rm Map}(G,R)}$. By restriction of
scalars, $M$ is also a right $R$-module and a right $(kG)^*$-module. Now
put $M_{\sigma}=M\cdot v_{\sigma}$.\\
1) If $\sigma\neq\tau$, then $M_\sigma\cap M_\tau=0$. Indeed, if
$m\cdot v_{\sigma}=n\cdot v_\tau$, then
$$m\cdot v_{\sigma}=m\cdot (v_{\sigma}\#v_\sigma)=(m\cdot v_{\sigma})\cdot v_{\sigma}=
(n\cdot v_{\tau})\cdot v_{\sigma}=n\cdot (v_{\tau}\#v_\sigma)=0.$$
2) $M_\sigma R_\rho\subset M_{\sigma\rho}$. Take $m\cdot v_\sigma\in M_\sigma$
and $r\in R_\rho$. Using \equref{rat2}, we find
$$(m\cdot v_\sigma)r=m\cdot (v_\sigma\#\iota_r)=m\cdot (\iota_r\#v_{\sigma\rho})=
(mr)\cdot v_{\sigma\rho}\in M_{\sigma\rho}.$$
This shows that $\bigoplus_{\sigma\in G}M_\sigma$ is a $G$-graded $R$-module; we will
show that it is the rational part of $M$.\\
3) $M_\sigma\subset \Rat(M)$. Take $m\cdot v_\sigma\in M_\sigma$ and $f\in
{\rm Map}(G,R)$. Using \equref{rat2}, we find
$$(m\cdot v_\sigma)\cdot f=m\cdot (v_\sigma\# f)=m\cdot (v_\sigma\# \iota_{f(\sigma)})=
(m\cdot v_\sigma)f(\sigma),$$
so $m\cdot v_\sigma$ is rational.\\
4) It follows from 3) that $\bigoplus_{\sigma\in G}M_\sigma\subseteq \Rat(M)$.\\
5) Let $m\in \Rat(M)$. Then there exist $m_1,\cdots,m_n\in M$,
$r_1,\cdots,r_n\in R$ and $\sigma_1,\cdots,\sigma_n\in G$ such that,
for all $\varphi\in {}^*\Cc$:
$$m\cdot\varphi=\sum_i m_i\varphi(r_i\ot \sigma_i).$$
Making the identification ${}^*\Cc\cong {\rm Map}(G,R)$, we find for all
$f:\ G\to R$:
$$m\cdot f=\sum_i m_ir_if(\sigma_i).$$
Replacing $m_i$ by $m_ir_i$, it is no restriction to take $r_i=1$. We can also
take the $\sigma_i$ pairwise different. Taking
$f=v_{\sigma}$, we find that
$$m_\sigma=\sum_i m_i\delta_{\sigma,\sigma_i}$$
so $m_\sigma\neq 0$ for only a finite number of $\sigma$, and $m_{\sigma_i}=m_i$.
Finally 
$$m=m\cdot \iota_1=\sum_im_i\iota_1(\sigma_i)=\sum_im_i=\sum_i m_{\sigma_i}\in
\bigoplus_{\sigma\in G}M_\sigma.$$
We conclude that
$$\Rat(M)=\bigoplus_{\sigma\in G} M\cdot v_\sigma,$$
and it is clear that $\Rat$ is exact.
\end{example}

In some situations, the converse of
\coref{2.6} also holds. If $R$ is left artinian, then any left comodule 
contains a simple comodule. The same holds for comodules that are locally 
artinian, in the sense that any finitely generated submodule is artinian.
If this is the case for $\Cc$, then the left socle of $C$ is essential in $\Cc$.
If moreover $\Cc$ is injective in ${}^{\Cc}\Mm$, then a decomposition 
$\Cc=\bigoplus_{i\in I} E(S_i)$ holds with usual arguments, where 
$\bigoplus_{i\in I}S_{i}={}^{\Cc}s(\Cc)$ is a decomposition of the 
left socle ${}^{\Cc}s(\Cc)$ of $\Cc$ and $E(S_{i})$ is the injective hull of $S_{i}$ contained in
$\Cc$. We will assume that $\Cc$ is locally projective
as a right $R$-module, which implies that ${}^{\Cc}\Mm$ is abelian, so that
we have a categorical definition of injective hulls.

\begin{proposition}\prlabel{2.7}
Assume that $\Cc$ also satisfies the right $\alpha$-condition, and that the
two following conditions hold:
\begin{enumerate}
\item $\Cc$ is an injective object of ${}^\Cc\Mm$;
\item $R$ is left artinian or
$\Cc$ is locally artinian in ${}_R\Mm$ (equivalently in ${}^\Cc\Mm$).
\end{enumerate}
Let $\bigoplus_{i\in I} S_i$ be the decomposition of the left socle of $\Cc\in
{}^\Cc\Mm$ into simple left $\Cc$-comodules, 
and $E(S_i)$ an injective envelope of $S_i$ contained in $\Cc$.
Then $\Rat^\Cc$ is exact if and only if each $E(S_i)$ is finitely generated.
\end{proposition}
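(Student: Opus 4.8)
The plan is to prove the two implications separately, the ``if'' part being immediate. If every $E(S_i)$ is finitely generated, then $\Cc=\bigoplus_{i\in I}E(S_i)$ is a decomposition of $\Cc$ into finitely generated left $\Cc$-comodules, and \coref{2.6} applies directly to give exactness of $\Rat^\Cc$.

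For the converse, assume $\Rat^\Cc$ is exact. By \prref{2.3} this means $\Rat^\Cc({}^*\Cc)$ is dense in ${}^*\Cc$, and applying \prref{2.4} to the decomposition $\Cc=\bigoplus_{i\in I}E(S_i)$ yields that $\Rat^\Cc({}^*E(S_i))$ is dense in ${}^*E(S_i)$ for every $i$. Fix $i$, put $E=E(S_i)$ and $S=S_i$; by \leref{2.5}(ii) (since $E$ is a direct summand of $\Cc$ in ${}^\Cc\Mm$) it suffices to upgrade this density to the equality $\Rat^\Cc({}^*E)={}^*E$, i.e. to show that $E$ is finitely generated over $R$. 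I would do this as follows. The comodule $S$ is finitely generated over $R$ by \thref{finite}; fix generators $s_1,\dots,s_m$. Under the isomorphism of \prref{2.1}(i) the inclusion $S\hookrightarrow\Cc$ corresponds to the restriction $\varepsilon|_S$ of the counit, so $\varepsilon|_S\neq0$, and being $R$-linear with simple source it is injective. By density choose $h\in\Rat^\Cc({}^*E)$ with $h(s_k)=\varepsilon(s_k)$ for all $k$; then $h|_S=\varepsilon|_S$ is injective. Writing the coaction $\rho(h)=\sum_{j=1}^n f_j\otimes_R y^j$ with $f_j\in\Rat^\Cc({}^*E)$ and $y^j\in\Cc$ (legitimate since $\Rat^\Cc({}^*E)\otimes_R\Cc\hookrightarrow{}^*E\otimes_R\Cc$ by flatness of ${}_R\Cc$), we get $(h\cdot\alpha)(x)=\sum_j f_j(x)\alpha(y^j)$ for all $\alpha\in{}^*\Cc$ and $x\in E$. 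Setting $D=\bigcap_{j=1}^n\Ker f_j$ and taking $\alpha=\varepsilon$ gives $D\subseteq\Ker h$, hence $D\cap S\subseteq\Ker(h|_S)=0$, and essentiality of $S$ in $E$ forces $D=0$. Thus $(f_1,\dots,f_n)\colon E\to R^n$ is an injective map of left $R$-modules.

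To conclude I would use the hypotheses on the base. If $R$ is left artinian, $R^n$ is a noetherian left $R$-module, so its submodule isomorphic to $E$ is finitely generated, and \thref{finite} then gives finite generation of $E$ as a comodule. If instead $\Cc$ is locally artinian, the same $f_j$, read through \prref{2.1}(i) as comodule maps $\tilde f_j\colon E\to\Cc$, satisfy $\bigcap_j\Ker\tilde f_j\subseteq D=0$, so $E$ embeds as a comodule into $\Cc^n$; injectivity of $E$ splits this, so $E$ is an $R$-module direct summand of the locally projective module $\Cc^n$ and is therefore itself locally projective over $R$. Together with local artinianness of $\Cc$ (under which every finitely generated comodule even has finite length) and the embedding $E\hookrightarrow R^n$, this should force $E$ to be finitely generated.

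The routine parts are the reduction via \prref{2.3}--\prref{2.4}, the choice of a rational functional injective on the simple socle, and the essentiality argument producing the embedding $E(S_i)\hookrightarrow R^n$. The main obstacle I anticipate is the very last step in the locally artinian case: deducing finite generation of $E(S_i)$ from the embedding into $R^n$ together with local projectivity and local artinianness --- when $R$ is noetherian this is trivial, but otherwise it needs a genuine argument, presumably exploiting that $E(S_i)$ is uniform with finitely generated, finite-length subcomodules sitting inside a free module.
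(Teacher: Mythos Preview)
Your ``if'' direction via \coref{2.6} matches the paper. For the converse there are two errors beyond the gap you acknowledge. First, ``$\varepsilon|_S$ is injective because $S$ is simple'' is false: $S$ is simple as a comodule, not as a left $R$-module, so a nonzero left $R$-linear map $S\to R$ need not be injective (already for coalgebras over a field, simple comodules typically have dimension $>1$). All you actually have is $h|_S=\varepsilon|_S\neq 0$. Second, essentiality of $S$ in $E$ holds in ${}^\Cc\Mm$, i.e.\ for subcomodules, not for arbitrary left $R$-submodules; your $D=\bigcap_j\Ker f_j$ is only an $R$-submodule, so $D\cap S=0$ would not by itself give $D=0$. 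Both issues can be repaired simultaneously: pass to the comodule map $\tilde h\colon E\to\Cc$, $\tilde h(x)=x_{[-1]}h(x_{[0]})$, corresponding to $h$ under \prref{2.1}(i). One checks $\tilde h(x)=\sum_j f_j(x)\,y^j$ (compare $(h\cdot\alpha)(x)$ both ways and use the left $\alpha$-condition), so $D\subseteq\Ker\tilde h$. Now $\Ker\tilde h$ \emph{is} a subcomodule, and $\tilde h|_S\neq 0$ since $h|_S=\varepsilon\circ\tilde h|_S\neq 0$; simplicity of $S$ gives $\Ker\tilde h\cap S=0$, essentiality gives $\Ker\tilde h=0$, hence $D=0$. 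With this, the left artinian case goes through.

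The remaining gap in the locally artinian case is genuine, and it is precisely where your approach diverges from the paper's. The paper never tries to embed $E(S)$ into $R^n$. Instead it proves that ${}^\perp S=\Ker\bigl({}^*E(S)\to{}^*S\bigr)$ is \emph{small} in ${}^*E(S)$: identifying ${}^*E(S)\cong{}^\Cc\Hom(E(S),\Cc)$, any $f$ with $f|_S\neq 0$ is injective on $E(S)$ (by essentiality, since $\Ker f$ is now a subcomodule), its image is an injective direct summand of $\Cc$, and using $\Cc=E(S)\oplus X$ one extends $f$ to an automorphism of $\Cc$; hence $f$ generates ${}^*E(S)$ as a right ${}^\Cc\End(\Cc)\cong{}^*\Cc$-module, which is exactly the statement ${}^\perp S\ll{}^*E(S)$. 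Since $S$ is finitely generated, ${}^*S$ is rational, and applying the exact functor $\Rat^\Cc$ directly to $0\to{}^\perp S\to{}^*E(S)\to{}^*S\to 0$ yields ${}^\perp S+\Rat^\Cc({}^*E(S))={}^*E(S)$; smallness then forces ${}^*E(S)=\Rat^\Cc({}^*E(S))$, and \leref{2.5}(ii) gives that $E(S)$ is finitely generated. This argument treats the two hypotheses uniformly and never needs the noetherian-type step that blocks your approach.
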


\begin{proof}
We have that $\Cc=\bigoplus_{i\in I} E(S_i)$, so one direction follows from
\coref{2.6}. Conversely, assume that $\Rat^\Cc$ is exact, and
let $S$ be a simple subcomodule of $C$, and
$E(S)$ an injective envelope of $S$ contained in $C$. 
Then there is a left subcomodule $X$ of $\Cc$ such that $E(S)\oplus X=\Cc$ in ${}^\Cc\Mm$. The functor ${}^\Cc\Hom(-,\Cc)$ is exact since $\Cc\in \Mm^\Cc$
is injective, and the composition of ${}^\Cc\Hom(-,\Cc)$ with the natural
functor $\Mm^\Cc\to \Mm_{{}^*\Cc}$ is also exact. Thus we obtain an
epimorphism $\pi:\ {}^*E(S)\to {}^*S$, with kernel
${}^{\perp}S=\{ f\in {}^{*}E(S)~|~ f_{\mid}{}_{S}=0\}$.\\ 
We will first show that ${}^{\perp}S\ll {}^{*}E(S)$. Using the isomorphisms
in \prref{2.1}, we can regard $\pi$ as a left ${}^\Cc\End(\Cc)$-module morphism
${}^\Cc\Hom(E(S),\Cc)\to {}^\Cc\Hom(S,\Cc)$.\\
Take $f\in {}^\Cc\Hom(E(S),\Cc)\setminus {}^\perp S$, i.e.
$f:\ E(S)\to \Cc$ such that $f_{|S}\neq 0$. Then $\Ker f\cap S=0$
since $S$ is simple, and therefore $\Ker f=0$, since $S$ is essential in
$E(S)$. So $E(S)\cong f(E(S))$, and there exists a left
$\Cc$-subcomodule $M$ of $\Cc$ such that $\Cc\cong f(E(S))\oplus M$.
We can extend $f$ to a left $\Cc$-comodule isomorphism
$\ol{f}:\ \Cc\to \Cc$, since $X\cong M$. Let $h$ be the inverse of $\ol{f}$.
Take an arbitrary
$g\in  {}^\Cc\Hom(E(S),\Cc)$ ,and extend $g$ to
$\ol{g}:\ \Cc= E(S)\oplus X\to \Cc$ by putting $\ol{g}_{|X}=0$.
Then $\ol{g}=\ol{g}\circ h\circ \ol{f}$, which means that
${}^\Cc\Hom(E(S),\Cc)$ is generated by $\ol{f}$ as a left
${}^\Cc\End(\Cc)$-module. Consequently 
 ${}^{\perp}S\ll {}^{*}E(S)$. \\
The Finiteness \thref{finite} shows that $S$ is finitely generated 
and then it follows from \prref{2.2} (ii) that
${}^{*}S$ is a rational ${}^{*}C$-comodule, so $\Rat^{\Cc}({}^{*}S)={}^{*}S$. 
$\Rat^{\Cc}$ is exact, so we have an exact sequence 
$$0\longrightarrow \Rat^{\Cc}({}^{\perp}S)\longrightarrow \Rat^{\Cc}({}^{*}E(S)) \stackrel{\pi}{\longrightarrow} \Rat^{\Cc}({}^{*}S)={}^{*}S\longrightarrow 0. $$
We obtain $\pi (\Rat^{\Cc}({}^{*}E(S)))={}^{*}S$, so ${}^{\perp}S+\Rat^{\Cc}({}^{*}E(S))={}^{*}E(S)$. It then follows that
${}^{*}E(S)$ is rational. This last part can also be seen as follows.
We have an exact sequence
$$0\longrightarrow{}^\perp S\longrightarrow {}^*E(S)\longrightarrow {}^*S
\longrightarrow 0,$$
with ${}^\perp S\ll {}^*E(S)$ and ${}^*S$ rational, so ${}^*E(S)$ is
rational by \prref{1.2}(i). 
Using \leref{2.5}, we find that ${}_{R}E(S)$ is finitely generated.
\end{proof}

\section{Semiperfect corings}\selabel{3}
Let $\Cc$ be an abelian category. A projective object $P\in \Cc$
together with a superfluous epimorphism $P\to M$ is called a projective
cover of $M$. $\Cc$ is called semiperfect if every simple
object has a projective cover. If a coring $\Cc$ satisfies the left
$\alpha$-condition, then $\Mm^\Cc$ is an abelian category, and $\Cc$
is called right semiperfect if $\Mm^\Cc$ is semiperfect. Semiperfect corings
were introduced first in \cite{Kaoutit3}.

\begin{theorem}\thlabel{3.1}
Let $R$ be a right artinian ring, and $\Cc$ an $R$-coring satisfying the left $\alpha$-condition.
The following statements are equivalent.
\begin{itemize}
\item[(i)] $\Cc$ is right semiperfect;
\item[(ii)]  Every finitely generated right comodule has a projective cover;
\item[(iii)]  every finitely generated right comodule has a finitely generated projective cover;
\item[(iv)]  the category $\Mm^{\Cc}$ has enough projectives;
\item[(v)]  every simple right comodule has a finitely generated projective cover; 
\item[(vi)]  the category $\Mm^\Cc$ has a progenerator (=projective generator).
\end{itemize}
\end{theorem}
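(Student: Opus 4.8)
The plan is to establish the cycle $(i)\Rightarrow(v)\Rightarrow(iii)\Rightarrow(vi)\Rightarrow(iv)\Rightarrow(v)$, together with the trivial implications $(iii)\Rightarrow(ii)\Rightarrow(i)$. I will use repeatedly the following consequences of the hypotheses. Since $\Cc$ satisfies the left $\alpha$-condition, $\Mm^\Cc$ is a Grothendieck category and the Finiteness \thref{finite} applies, so a comodule is finitely generated exactly when it is finitely generated as an $R$-module; since moreover $R$ is right artinian, every such comodule has finite length in $\Mm^\Cc$. In particular every simple comodule is cyclic, hence finitely generated, every nonzero comodule has a simple subcomodule, and the finitely generated comodules form a generating family, so $\Mm^\Cc$ is locally finite. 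Also, $\Mm^\Cc$ has a generator, so its simple objects form a set of isomorphism classes.

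$(i)\Leftrightarrow(v)$: if $p\colon P\to S$ is a projective cover with $S$ simple, pick $x\in P$ with $p(x)\neq 0$; then $p(\langle x\rangle)=S$, so $\langle x\rangle+\Ker p=P$, and $\Ker p\ll P$ forces $\langle x\rangle=P$, so $P$ is cyclic, hence finitely generated; the converse is trivial. $(v)\Rightarrow(iii)$: a finitely generated $M$ has finite length, so $\mathrm{rad}(M)\ll M$ and $M/\mathrm{rad}(M)\cong\bigoplus_{j=1}^{k}S_j$ is semisimple of finite length. Writing $P_{[S]}$ for the finitely generated projective cover of a simple $S$, the sum $\bigoplus_j P_{[S_j]}\to\bigoplus_j S_j$ is a projective cover (a finite direct sum of projective covers is one), and a lift of it along $M\to M/\mathrm{rad}(M)$ is, using $\mathrm{rad}(M)\ll M$, an epimorphism $g\colon\bigoplus_j P_{[S_j]}\to M$ with $\Ker g\ll\bigoplus_j P_{[S_j]}$, i.e. a finitely generated projective cover of $M$. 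The implications $(iii)\Rightarrow(ii)\Rightarrow(i)$ are immediate. $(iii)\Rightarrow(vi)$: put $\mathcal{P}=\bigoplus_{[S]}P_{[S]}$, the coproduct over the isomorphism classes of simple comodules; a coproduct of projectives is projective, and $\mathcal{P}$ is a generator because for any proper subcomodule $N$ of any $M$ the nonzero comodule $M/N$ contains a simple $S'$, and a lift of $P_{[S']}\to S'\hookrightarrow M/N$ along $M\to M/N$ is a morphism $P_{[S']}\to M$ (hence a morphism $\mathcal{P}\to M$) whose image is not contained in $N$. Finally $(vi)\Rightarrow(iv)$ is clear.

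The heart of the argument is $(iv)\Rightarrow(v)$. Fix a simple comodule $S$, and use enough projectives to choose an epimorphism $\pi\colon P\to S$ with $P$ projective in $\Mm^\Cc$. Pick $x\in P$ with $\pi(x)\neq 0$; then $N:=\langle x\rangle$ is finitely generated, hence of finite length, and $\pi(N)=S$. By the descending chain condition inside $N$, choose a subcomodule $N_0\subseteq N$ minimal among those mapping onto $S$; minimality gives $N_0\cap\Ker\pi\ll N_0$, and evidently $N_0+\Ker\pi=P$. Since $P$ is projective, lift $\pi$ through the epimorphism $\pi|_{N_0}\colon N_0\to S$ to a morphism $g\colon P\to N_0$. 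Then $\im g+(N_0\cap\Ker\pi)=N_0$, so $g$ is onto; and if $j\colon N_0\hookrightarrow P$ is the inclusion, $\pi|_{N_0}\circ(g\circ j)=\pi|_{N_0}$ shows that $g\circ j-\mathrm{id}$ has image in $N_0\cap\Ker\pi$, whence $g\circ j$ is onto, and — as $N_0$ has finite length — an automorphism. Therefore $j$ splits, $N_0$ is a direct summand of $P$ and thus projective, and $\pi|_{N_0}\colon N_0\to S$ is a superfluous epimorphism from a finitely generated projective comodule: a finitely generated projective cover of $S$. This closes the cycle.

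The step $(iv)\Rightarrow(v)$ is the only one I expect to require care: from an arbitrary projective presentation of a simple comodule one must manufacture a finitely generated projective cover. The construction works precisely because the Finiteness Theorem together with the artinian hypothesis on $R$ make the cyclic comodule $N$, and hence $N_0$, of finite length, so that a surjective endomorphism of $N_0$ is automatically invertible, while the projectivity of $P$ provides the retraction $g$. The remaining ingredients — the radical computation in $(v)\Rightarrow(iii)$, the facts that finite direct sums of projective covers are projective covers and that coproducts of projectives are projective, and the generator argument in $(iii)\Rightarrow(vi)$ — are routine, and can be supplied from the standard theory of semiperfect modules.
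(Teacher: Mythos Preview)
Your proof is correct, and your $(iv)\Rightarrow(v)$ step is genuinely different from, and considerably more elementary than, the paper's corresponding implication $(iv)\Rightarrow(iii)$.

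The paper proceeds as follows: given a projective epimorphism $P\to M$ with $M$ finitely generated, it first writes $P$ as a direct summand of a coproduct $\bigoplus_{i\in I}M_i$ of finitely generated comodules, refines the $M_i$ to indecomposables with local endomorphism rings (using finite length), and then invokes the Crawley--J\o nsson--Warfield theorem to conclude that $P\cong\bigoplus_{i\in J}M_i$ for some $J\subset I$; a finite sub-sum then surjects onto $M$, and from this finitely generated projective surjection the paper extracts a projective cover by a detailed dualization of the Eckmann--Schopf construction, occupying about a page. Your argument bypasses both of these ingredients entirely: you work inside a single cyclic subcomodule $N\subset P$, pick a minimal $N_0\subseteq N$ surjecting onto $S$, and use the projectivity of $P$ to produce a retraction of $N_0\hookrightarrow P$ via the finite-length trick that a surjective endomorphism of $N_0$ is invertible. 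This is shorter, avoids any appeal to decomposition theory, and isolates the projective cover directly as a summand of the given projective. The paper's route, on the other hand, yields the stronger conclusion $(iii)$ immediately (finitely generated projective covers for \emph{all} finitely generated comodules), whereas you reach $(iii)$ only after passing through $(v)$; but since you supply $(v)\Rightarrow(iii)$ by the standard radical argument, the net content is the same.

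Your $(i)\Leftrightarrow(v)$ via ``a projective cover of a simple is cyclic'' and your $(iii)\Rightarrow(vi)$ via the coproduct of the $P_{[S]}$'s are also slight streamlinings of the paper's corresponding steps, but the substantive divergence is in handling $(iv)$.
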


\begin{proof}
(i)$\Rightarrow$(ii). First notice that an $R$-module is finitely generated if and only if
it has finite length. Every finitely generated comodule $M$ has a maximal
subcomodule, so its Jacobson  radical $J(M)$ in $\Mm^\Cc$
is different from the comodule itself. $J(M)\ll M$, and $M/J(M)$ is a semisimple finitely
generated comodule. Every simple component of $M/J(M)$ has a projective
cover, and the direct sum of all these projective covers is a projective cover
$f: P\to M/J(M)$ of $M/J(M)$. Since $P$ is
projective, there exists $g:\ P\to M$ such that $u\circ g= f$, with
$u:\ M\to M/J(M)$ the canonical projection. Then a usual argument shows that $g:P\rightarrow M$ is a projective cover:
 $u(g(P))=f(P)=M/J(M)$, hence $u(J(M)+g(P))=M/J(M)$ and it follows that $J(M)+g(P)=M$.
 From the fact that $J(M)$ is small in $M$, it follows that $g(P)=M$ and $g$ is surjective.
 Finally $\Ker g \subset \Ker f\ll P$, so $\Ker g \ll P$, and $g:\ P\to M$ is a projective cover
 of $M$.\\
 
 (iv)$\Rightarrow$(iii). Let $M$ be a finitely generated comodule. We know that
 there exists a projective object $P\in \Mm^{\Cc}$ and a $\Cc$-colinear epimorphism
 $f:\ P\to M$. Let $(M_i)_{i\in I}$ be a family of finitely generated comodules such that
 we have a $\Cc$-colinear epimorphism $f:\ \bigoplus_{i\in I} M_i\to P$.
 As $P$ is projective, we have that $\bigoplus_{i\in I} M_i\cong P\oplus X$
 as comodules. Since $R$ is artinian, we can assume that the $M_i$ are
 indecomposable. As they have finite length in $\Mm_R$, they also have
 finite length in $\Mm^\Cc$ and $\Mm_{{}^*\Cc}$, so their ${}^*\Cc$-endomorphism rings
 are local, by the Krull-Schmidt Theorem (see \cite[12.8]{AF}). 
 It then follows from the Crawley-J\o nsson-Warfield Theorem (see \cite[26.5]{AF}) that
 $P\cong \bigoplus_{i\in J} M_i$, with $J\subset I$. The $M_i$ are finitely
 generated (rational) ${}^*\Cc$-modules, and are projective objects of
 $\Mm^\Cc$, since they are direct summands of $P$. Since $M$ is finitely
 generated, we can find a finite $F\subset J$ and a projection
 $\bigoplus_{i\in F} M_i\to M$, induced by $f$. Thus we have found
 a finitely generated projective object $P\in {}^\Cc\Mm$ and a $\Cc$-colinear epimorphism
 $f:\ P\to M$. Dualizing the proof of the Eckmann-Schopf Theorem on the
 existence of the injective envelope of a module, see e.g. \cite[18.10]{AF}, we can show that
 $M$ has a projective cover. This works as follows.\\
 $\bullet$ Let $K=\Ker f$, and consider the set $V$ consisting
 of subcomodules $H\subset K$ such that $K/H\ll P/H$, which is equivalent to
 $$H\subset T\subset P, K+T=P~~\Longrightarrow~~T=P$$
 $V\neq \emptyset$ since $K\in V$. $V$ contains a minimal element $K'$ since
 $R$ is artinian.\\
 $\bullet$ Then consider the set $W$ consisting of subcomodules $Y\subset P$ such that $K'+Y=P$.
 This set is nonempty, since $P$ belongs to it. Then take an element in this
 set such that $K'\cap Y$ is minimal. Let $p:\ P\to P'=P/K'$ be the projection. Since $P$ is projective, there
 exists a comodule morphism $h:\ P\to Y$ such that 
$p_{|Y}\circ h=p$, that is, the following diagram commutes:
$$\begin{diagram}
&&P\\
&\SW^h&\dTo_p\\
Y&\rTo^{p_{|Y}}& P'
\end{diagram}$$
We will now show that $p_{|Y}$ is an isomorphism.\\
$\bullet$ $h$ is surjective. Take $y\in Y$. Then
$$p(y-h(y))=p(y)-p(h(y))=p(y)-p(y)=0$$
so $y-h(y)\in K'$ and
$$y=(y-h(y))+h(y)\in (Y\cap K')+\im h.$$
It follows that $Y\subset (Y\cap K')+\im h$. The converse implication is obvious,
so
$$Y= (Y\cap K')+\im h$$
It then follows that
$$P=Y+K'= (Y\cap K')+\im h +K'=\im h+K'$$
The minimality condition on $Y$ then yields that $Y=\im h$, so $h$ is surjective.\\
$\bullet$ $Y\cap K'\ll Y$. If $H\subset Y$ and $H+(Y\cap K')=Y$, then
$H+K'=H+(Y\cap K')+K'=Y+K'=P$. This means that $H\in W$, and 
 the minimality condition on $Y$ gives us that
$H\cap K'\supset Y\cap K'$, and $H\cap K'\subset Y\cap K'$ since $H\subset Y$.
Then we find that $Y= H+(Y\cap K')=H+(H\cap K')=H$, as needed.\\
$\bullet$ From the fact that $0=p(K')=(p\circ h)(K')$,
it follows that $h(K')\subset \Ker (p_{|Y})=Y\cap K'$.\\
$\bullet$ $\Ker h =K'$. It is clear that $\Ker h \subset K'$.
It follows that $K'\subset \Ker h$ if we can show that $\Ker h\in V$, or
$$\Ker h\subset T\subset P, K+T=P~~\Rightarrow~~T=P$$
Assume $\Ker h\subset T\subset P$ and $K+T=P$. Since $K'\subset P$,
we find that $K+K'+T=P$. Also $K'\subset T+K'\subset P$, so it follows from
the fact that $K'\in V$ that $K'+T=P$.Then $h(K')+h(T)=h(P)=Y$, since $K$ is surjective.
Since $h(K')\subset Y\cap K'$, this implies that $Y\cap K'+h(T)=Y$, hence
$h(T)=Y$, since $Y\cap K'\ll Y$, and finally $T=P$ because $T\subset \Ker h$.\\
$\bullet$ $p_{|Y}$ is surjective, as $p=p_{|Y}\circ h$ and $p$ is an epimorphism. \\
$\bullet$ $p_{|Y}$ is injective. Take $y\in Y$ such that $p(y)=0$. $h$ is
surjective, so $y=h(z)$. Then $0=p(y)=p(h(z))=p(z)$, so $z\in K'=\ker h$,
and $y=h(z)=0$.\\
$\bullet$ It now follows that $Y\cap K'=0$. We know from the definition of $Y$
that $Y+K' =P$. Hence $Y\oplus K'=P$, and $P'\cong Y$ is 
finitely generated projective, being a direct factor of $P$. Now look at the
commutative diagram
$$\begin{diagram}
0&\rTo^{}&K'&\rTo^{}&P&\rTo^{p}&P'&\rTo^{}&0\\
&&\dTo^{\subset}&&\dTo^{=}&&&&\\
0&\rTo^{}&K&\rTo^{}&P&\rTo^{f}&M&\rTo^{}&0
\end{diagram}$$
It follows that we have an epimorphism $P'\to M$ in $\Mm^\Cc$,
with kernel $K/K'$. This is a projective cover, since $K/K'\ll P'=P/K'$. 
Moreover, $P^{\prime}$ is finitely generated as a quotient of $P$.\\

(ii)$\Rightarrow$(vi). Take a family $(M_i)_{i\in I}$ consisting of finitely
generated comodules that generate $\Mm^\Cc$. Let $P_i\to M_i$ be a projective
cover of $M_i$. Then $\bigoplus_{i\in I} P_i$ is a projective generator of
$\Mm^\Cc$.\\

(vi)$\Rightarrow$(iv), (iii)$\Rightarrow$(ii)$\Rightarrow$(i) and (iii)$\Rightarrow$(v)$\Rightarrow$(i) are obvious.
\end{proof}
 
 \begin{proposition}\prlabel{3.2}
 Let $R$ be a right artinian ring, and $\Cc$ an $R$-coring satisfying the left $\alpha$-condition.
 \begin{enumerate}
 \item[(i)] $\Mm^{{\rm fg}\Cc}$ is an abelian category;
 \item[(ii)] $Q\in \Mm^{{\rm fg}\Cc}$ is injective if and only if $Q$ is
 an injective object in $\Mm^\Cc$;
 \item[(iii)] $P\in \Mm^{{\rm fg}\Cc}$ is projective if and only if $P$ is
 a projective object in $\Mm^\Cc$.
 \end{enumerate}
 \end{proposition}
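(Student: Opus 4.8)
The plan is to reduce all three parts to one fact. Since $R$ is right artinian it is right noetherian, so by the Finiteness \thref{finite} the finitely generated right $\Cc$-comodules are exactly the comodules that are finitely generated over $R$. Any such comodule is a noetherian object of $\Mm^\Cc$ (an ascending chain of subcomodules is, in particular, one of $R$-submodules), and every $\Cc$-comodule is the directed union of its finitely generated subcomodules. Hence $\Mm^\Cc$ is a locally noetherian Grothendieck category whose full subcategory of noetherian objects is precisely $\Mm^{{\rm fg}\Cc}$.

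Granting this, part (i) is the routine verification --- already made for left comodules in the discussion preceding \prref{2.2} --- that $\Mm^{{\rm fg}\Cc}$ is a full subcategory of the abelian category $\Mm^\Cc$ which contains $0$ and is closed under finite direct sums, under subobjects (a subcomodule of a noetherian comodule is finitely generated) and under quotient objects (a quotient of a finitely generated comodule is finitely generated). Because the forgetful functor $\Mm^\Cc\to\Mm_R$ is exact, kernels and cokernels in $\Mm^\Cc$ of morphisms between finitely generated comodules are again finitely generated, so $\Mm^{{\rm fg}\Cc}$ inherits the abelian structure of $\Mm^\Cc$.

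For part (ii), ``injective in $\Mm^\Cc$ $\Rightarrow$ injective in $\Mm^{{\rm fg}\Cc}$'' is immediate, by restricting the extension property to monomorphisms between finitely generated comodules. For the converse I would invoke the Baer-type criterion valid in a locally noetherian category: an object $Q$ is injective in $\Mm^\Cc$ as soon as, for every noetherian comodule $N$ and every subcomodule $N'\subseteq N$, each morphism $N'\to Q$ extends along $N'\hookrightarrow N$ --- the general extension problem reduces to this one by the usual Zorn argument, using that every object is a directed union of noetherian subobjects. Since $R$ is right noetherian, $N'$ is again finitely generated, so $N,N'\in\Mm^{{\rm fg}\Cc}$ and the required extension is supplied by the hypothesis that $Q$ is injective in $\Mm^{{\rm fg}\Cc}$.

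For part (iii), one direction is again trivial. For the converse, let $P\in\Mm^{{\rm fg}\Cc}$ be projective in $\Mm^{{\rm fg}\Cc}$, let $g:\ N\to M$ be an epimorphism in $\Mm^\Cc$ and $f:\ P\to M$ a morphism. Since $f(P)$ is finitely generated (a quotient of $P$), replacing $M$ by $f(P)$ and $N$ by $g^{-1}(f(P))$ we may assume $M\in\Mm^{{\rm fg}\Cc}$. Writing $N$ as the directed union of its finitely generated subcomodules $N_\lambda$ gives $M=g(N)=\bigcup_\lambda g(N_\lambda)$, and since $M$ is a noetherian object this directed family of subobjects has a largest member, say $g(N_\mu)=M$. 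Then $g|_{N_\mu}:\ N_\mu\to M$ is an epimorphism in $\Mm^{{\rm fg}\Cc}$, so projectivity of $P$ there produces $h:\ P\to N_\mu$ with $g|_{N_\mu}\circ h=f$; composing $h$ with $N_\mu\hookrightarrow N$ lifts $f$, whence $P$ is projective in $\Mm^\Cc$. I expect the two converse implications, in (ii) and in (iii), to carry all the weight: each reduces an extension or lifting problem in $\Mm^\Cc$ to one in $\Mm^{{\rm fg}\Cc}$, and both reductions work only because the right noetherian hypothesis on $R$ keeps the relevant sub- and quotient comodules finitely generated. Part (iii) is the more delicate one; the point to watch is that the ``largest member of a directed union'' step genuinely uses that $M$ is a noetherian object of $\Mm^\Cc$, i.e. it uses \thref{finite} together with $R$ being right noetherian, and fails without some noetherian hypothesis on $R$.
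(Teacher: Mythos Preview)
Your proof is correct and follows the same strategy as the paper: a Zorn/Baer-type extension argument for (ii), and a reduction of the lifting problem to finitely generated comodules for (iii). The only variation is in (iii), where the paper locates the finitely generated subcomodule of $N$ surjecting onto $f(P)$ more directly---by choosing explicit preimages $y_i$ of a finite $R$-generating set $x_i=f(p_i)$ of $f(P)$ and taking the subcomodule they generate---rather than via your directed-union-plus-noetherian argument; both work under the stated hypotheses.
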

 
 \begin{proof}
 (i) The fact that $\Mm^{{\rm fg}\Cc}$ has kernels follows from the assumption
 that $R$ is right artinian and the Finiteness Theorem.\\
 
 (ii) This is a straighforward adaptation of the corresponding result on
 comodules over a coalgebra. Let $u:\ N\to M$ be a monomorphism in $\Mm^\Cc$
 and $f:\ N\to Q$. Consider the set
 $$X=\{(N',f')~|~N\subset N'\subset M,~f':\ N'\to Q,~f'_{|N}=f\}$$
 ordered by the relation $(N',f')<(N'',f'')$ if $N'\subset N''$ and
 $f''_{N'}=f'$. Take a maximal element $(N_0,f_0)$ in $X$, and assume that
 $N_0\neq M$. Take $m\in M\setminus N_0$ and $X$ the subcomodule of $M$
 generated by $M$. By the Finiteness Theorem for comodules, $X$ is finitely
 generated, so there exists $g:\ X\to Q$ such that the following diagram
 commutes:
 $$\begin{diagram}
 0&\rTo^{}&N_0\cap X&\rTo^{}& X\\
 &&\dTo^{f_{0|N_0\cap X}}&\SW_{g}\\
 &&Q&&
 \end{diagram}$$
 Then consider the map $f':\ N'=N_0+X\to Q$, defined by $f'(n_0+x)=
 f_0(n_0)+g(x)$. The usual computation shows that $f'$ is well-defined,
 and $(N',f')$ is an element in $X$ that is strictly greater than $(N_0,f_0)$,
 a contradiction.
 \begin{equation}\eqlabel{diagram}
 \begin{diagram}
 &&P&&\\
 &\SW^{g}&\dTo_{f'}&&\\
 Y'&\rTo^{\pi}&X'&\rTo^{}0\\
 \dTo_{\subset}&&\dTo_{\subset}&&\\
 Y&\rTo^{\pi}&X&\rTo^{}0
 \end{diagram}
 \end{equation}
 (iii) Let $\pi:\ Y\to X$ and $f:\ P\to X$ be morphisms in $\Mm^\Cc$, with
 $\pi$ surjective. Let $\{p_1,\cdots,p_n\}$ be a set of generators of $P$
 as an $R$-module (and a fortiori as a $\Cc$-comodule). Then $X'=\im f$
 is generated by  $\{x_1,\cdots,x_n\}$, with $x_i=f(p_i)$. Take $y_i\in Y_i$
 such that $\pi(y_i)=x_i$, and let $Y'$ be the $\Cc$-submodule (or
 ${}^*\Cc$-submodule) of $Y$ generated by $\{y_1,\cdots,y_n\}$.
 Let $f':\ P\to X'$ be the corestriction of $f$. Since $X'$ and $Y'$ are finitely
 generated and $\pi_{|Y^{\prime}}$ is still an epimorphism, there exists $g:\ P\to X'$ such   
 that $f'=\pi\circ g$,
 and the projectivity of $P$ in $\Mm^\Cc$ follows from the commutativity of
 the diagram \equref{diagram}.
 \end{proof}

\section{Applications and examples}\selabel{5}
\subsection{Application to qF-rings}\selabel{5.1}
In \thref{3.1}, we gave equivalent conditions for the semiperfectness of a left locally projective
coring $\Cc$ over a right artinian ring $R$. In the case where $R$ is a qF-ring, more characterizations
are possible. This has been studied recently by El Kaoutit and G\'omex-Torrecillas 
(see \cite[Theorems 3.5, 3.8, 4.2]{Kaoutit3}. Using the results of the previous Sections, we find a different proof of these results.\\
First recall that a qF ring, or quasi-Frobenius ring, is a ring which is right artinian
and injective as a right $R$-module, or, equivalently, left artinian
and injective as a left $R$-module (in \cite{W}, these rings are called
noetherian QF rings). 
In this situation, $R$ is a cogenerator
of $\Mm_R$ and ${}_R\Mm$, see \cite[48.15]{W}. Since a qF-ring is a left and right
perfect ring, local projectivity is equivalent to projectivity. Also recall that flat modules
over qF-rings are projective. Let $R$ be a qF-ring, and assume that
$\Cc\in{}_R\Mm$ is flat (or, equivalently, (locally) projective). Then ${}^{\Cc}\Mm$
is a Grothendieck category, and the forgetful functor ${}^{\Cc}\Mm\to{}_{R}\Mm$
is exact and has a right adjoint $\Cc\ot_R-$. Since ${}_R\Mm$ has enough
injectives and the forgetful functor is exact, $\Cc\ot_R-$ preserves injectives. Now $R\in {}_R\Mm$ is injective
because $R$ is a qF-ring, so $\Cc=\Cc\ot_R R$ is an injective object of ${}^\Cc\Mm$,
and we can apply \prref{2.7}. We find that
$\Cc=\bigoplus_{i\in I} E(S_i)$, with $\bigoplus_{i\in I}S_i$ the decomposition 
of the left socle of $\Cc\in {}^\Cc\Mm$.\\
If $R$ is a qF-ring, then
the contravariant functors
$$(-)^{*}=\Hom_{R}(-,R):\ \Mm_R\to {}_R\Mm,~~
{}^{*}(-)={}_{R}\Hom(-,R):\ {}_R\Mm\to \Mm_R,$$
define an equivalence duality between the categories of finitely generated left 
$R$-modules and finitely generated right $R$-modules. More explicitely, 
every finitely generated left $R$-module $M$ is reflexive, that is, the map
$$\Phi_M:\ M\to ({}^{*}M)^{*},~~\Phi_{M}(m)(f)=f(m)$$
is an isomorphism. This result follows, for example, after we take $U=M=R$ in \cite[47.13(2)]{W}.\\
If $M$ is not finitely generated, then we still have the following result.

\begin{lemma}\lelabel{1.6}
Let $R$ be a qF ring and $M\in {}_R\Mm$-module.
Then $\im(\Phi_M)$ is dense in $({}^{*}M)^{*}$ with respect to the finite
topology on $\Hom_R({}^*M,R)$.
\end{lemma}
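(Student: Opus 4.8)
The plan is to reduce the statement, by a cokernel/cogenerator argument, to the reflexivity of finitely generated modules over a qF ring.

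Fix $\phi\in({}^{*}M)^{*}=\Hom_{R}({}^{*}M,R)$ and a finite subset $\{f_{1},\dots,f_{n}\}\subset{}^{*}M$. By the definition of the finite topology on $\Hom_{R}({}^{*}M,R)$, it suffices to produce $m\in M$ with $f_{i}(m)=\phi(f_{i})$ for all $i$, since then $\Phi_{M}(m)\in\Oo(\phi,f_{1},\dots,f_{n})$. Let $F=\sum_{i}f_{i}R\subset{}^{*}M$ be the finitely generated right $R$-submodule generated by $f_{1},\dots,f_{n}$, and let $\Psi\colon M\to F^{*}=\Hom_{R}(F,R)$ be the left $R$-linear map $\Psi(m)(f)=f(m)$ (that is, $\Phi_{M}$ followed by the restriction $({}^{*}M)^{*}\to F^{*}$). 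If $\Psi$ is surjective we may choose $m$ with $\Psi(m)=\phi|_{F}$, and then $f_{i}(m)=\phi(f_{i})$ for all $i$. So the lemma reduces to the claim that $\Psi\colon M\to F^{*}$ is surjective for every finitely generated right $R$-submodule $F$ of ${}^{*}M$.

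To prove this claim, let $C=\Coker\Psi=F^{*}/\im\Psi$, a left $R$-module. Since $R$ is a cogenerator of ${}_{R}\Mm$, it is enough to show ${}^{*}C={}_{R}\Hom(C,R)=0$. Applying ${}_{R}\Hom(-,R)$ to $0\to\im\Psi\to F^{*}\to C\to 0$ identifies ${}^{*}C$ with the set of $\eta\in{}^{*}(F^{*})$ that vanish on $\im\Psi$. Now $F$ is a finitely generated right module over a qF ring, hence reflexive, i.e. $\Phi_{F}\colon F\to{}^{*}(F^{*})$ is an isomorphism; transporting the previous description through $\Phi_{F}$ and using $\Psi(m)(f)=f(m)$ identifies ${}^{*}C$ with $\{f\in F\mid f(m)=0\text{ for all }m\in M\}$. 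Since $F\subset{}^{*}M={}_{R}\Hom(M,R)$, any such $f$ is the zero homomorphism; hence ${}^{*}C=0$, so $C=0$ and $\Psi$ is onto.

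The only step that is not purely formal is the surjectivity of $\Psi$, and this is exactly where the qF hypothesis enters: already for an infinite-dimensional vector space $M$ the double dual $({}^{*}M)^{*}$ strictly contains $M$, so $\Psi$ need not be onto in general. An equivalent, more concrete version of this step runs as follows. Fix a surjection $g\colon R^{n}\to F$ with $g(e_{i})=f_{i}$; left exactness of $\Hom_{R}(-,R)$ identifies $F^{*}$ with $\{\xi\in{}_{R}R^{n}\mid\xi|_{\Ker g}=0\}$ and $\im\Psi$ with $N=\{(f_{1}(m),\dots,f_{n}(m))\mid m\in M\}$. If $N\neq F^{*}$, the cogenerator property gives a nonzero $h\colon F^{*}/N\to R$, which extends by injectivity of ${}_{R}R$ to $H\colon{}_{R}R^{n}\to R$ with $H|_{N}=0$ and $H|_{F^{*}}\neq 0$. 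Writing $c=(H(e_{1}),\dots,H(e_{n}))$, the condition $H|_{N}=0$ reads $\sum_{i}f_{i}\cdot H(e_{i})=0$ in ${}^{*}M$, i.e. $c\in\Ker g$, and then $H$ annihilates all of $\{\xi\mid\xi|_{\Ker g}=0\}=F^{*}$, a contradiction. In both approaches the crux is the interplay between reflexivity of finitely generated modules and $R$ being an injective cogenerator.
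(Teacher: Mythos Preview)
Your argument is correct. Both your proof and the paper's reduce to the reflexivity of a suitable finitely generated module, but they sit on opposite sides of the duality. The paper passes to the quotient $N=M/\bigcap_i\Ker f_i$, observes via the diagonal embedding $N\hookrightarrow R^n$ that $N$ is a finitely generated \emph{left} $R$-module, and then invokes the isomorphism $\Phi_N$ directly to produce the required $m$: given $T\in({}^*M)^*$ one forms $t=T\circ\pi^*\in({}^*N)^*$ and reads off $n=\pi(m)$ with $\Phi_N(n)=t$. No cokernel or cogenerator step is needed. By contrast you stay on the dual side, taking the finitely generated \emph{right} submodule $F=\sum f_iR\subset{}^*M$ and proving that the evaluation $M\to F^*$ is onto via reflexivity of $F$ together with the fact that $R$ cogenerates ${}_R\Mm$. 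Your route makes the injective--cogenerator aspect of the qF hypothesis explicit (and your second, concrete version even uses injectivity of ${}_RR$), whereas the paper hides all of this inside the single statement ``finitely generated modules over a qF ring are reflexive''. The trade-off is that the paper's computation is a bit shorter and more direct, while your argument isolates exactly which properties of $R$ are being used at each step.
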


\begin{proof}
Take $T\in ({}^{*}M)^{*}$ and $F=\{f_{1},\dots,f_{n}\}\subset {}^{*}M$. We have to prove that there exists an $m\in M$ such that $T(f_{i})=\Phi_{M}(f_{i})=f_{i}(m)$. Let ${}^{\perp}F=\bigcap_{i=1}^n \Ker f_{i}\subset M$ and $N=M/ {}^{\perp}F$. Then we have a natural inclusion 
$$\frac{M}{\bigcap_{i=\overline{1,n}} \Ker\,f_{i}}\hookrightarrow \bigoplus_{i=1}^n\frac{M}{\Ker f_{i}}\simeq \bigoplus_{i=1}^n\im\,f_{i}\hookrightarrow R^{n} $$
and this shows that $N=M/{}^{\perp}F$ has finite length. Let $\pi:\ M\longrightarrow M/{}^{\perp}F=N$ be the canonical projection and consider its dual 
$\pi^{*}:\ {}^{*}N\longrightarrow {}^{*}M$. By the construction of $N$ as a factor module, there are left $R$-linear maps
$\overline{f_{i}}:\ N\longrightarrow R$ such that $\overline{f_{i}}\circ\pi= f_{i}$. 
Consider $t=T\circ\pi^{*}\in ({}^{*}N)^{*}$. As $N$ is finitely generated, $\Phi_{N}$ is an isomorphism (it gives the above stated duality between ${}_{R}{\mathcal M}$ and ${\mathcal M}_{R}$), so there is $n=\hat{m}=\pi(m)\in N$ such that $t=\Phi_{N}(n)$. Then $T(f_{i})=T(\overline{f_{i}}\circ\pi)=(T\circ\pi^{*})(\overline{f_{i}})=t(\overline{f_{i}}) =\Phi_{N}(n)(\overline{f_{i}})=\overline{f_{i}}(\pi(m))=f_{i}(m)$, 
as needed.
\end{proof}

If $\Cc$ is a left and right projective $R$-coring, then the duality is kept after we pass to the
categories of finitely generated $\Cc$-comodules: he functors 
${}^*(-)={}_R\Hom(-,R)$ and $(-)^*=\Hom_R(-,R)$
define an equivalence between the categories
${}^{{\rm fg}\Cc}\Mm$ and $\Mm^{{\rm fg}\Cc}$. To prove this, it suffices to show that $\Phi_M$
is left $\Cc$-colinear, or, equivalently, right $\Cc^*$-linear, for every finitely generated left
$\Cc$-comodule $M$, and this is a standard computation. From this duality and
\prref{3.2}, we obtain the following result.

\begin{corollary}\colabel{4.2}
Let $R$ be a qF-ring, and $\Cc$ an $R$-coring that is projective as a left and
right $R$-module. A finitely generated right $\Cc$-comodule $M$ is injective
(resp. projective) in $\Mm^\Cc$ if and only if $M^*$ is projective
(resp. injective) in ${}^{\Cc}\Mm$.
\end{corollary}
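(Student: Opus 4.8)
The plan is to deduce the Corollary from two things already in hand: the contravariant duality between $\Mm^{{\rm fg}\Cc}$ and ${}^{{\rm fg}\Cc}\Mm$ recalled just before the statement, and \prref{3.2}, which matches (co)injectivity and (co)projectivity in the finitely generated comodule categories with the corresponding notions in the ambient categories $\Mm^\Cc$ and ${}^\Cc\Mm$.

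First I would assemble the hypotheses we may use. A qF-ring is right \emph{and} left artinian, and since $\Cc$ is projective as a left and as a right $R$-module it is locally projective on both sides, hence satisfies both the left and the right $\alpha$-condition. Therefore $\Mm^\Cc$ and ${}^\Cc\Mm$ are abelian, the Finiteness \thref{finite} applies on both sides, and \prref{3.2} applies to $\Mm^\Cc$ as stated; by the left--right mirror (apply \prref{3.2} to the co-opposite coring over $R^{\rm op}$, which is again a qF-ring, using that $\Cc$ is right projective) it also applies to ${}^\Cc\Mm$. In particular, for a finitely generated right $\Cc$-comodule $M$, ``$M$ injective (resp. projective) in $\Mm^\Cc$'' is equivalent to ``$M$ injective (resp. projective) in $\Mm^{{\rm fg}\Cc}$'', and likewise on the left for finitely generated left $\Cc$-comodules.

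Next I would invoke the duality itself. As noted above, $(-)^{*}=\Hom_R(-,R)$ and ${}^{*}(-)={}_R\Hom(-,R)$ restrict to a contravariant equivalence between ${}^{{\rm fg}\Cc}\Mm$ and $\Mm^{{\rm fg}\Cc}$, with the reflexivity maps $\Phi_M$ as unit and counit. Since $R$ is self-injective on both sides, $\Hom_R(-,R)$ is exact on finitely generated modules, so both functors are \emph{exact}. An exact contravariant equivalence of abelian categories interchanges projectives and injectives: full faithfulness gives, for $M\in\Mm^{{\rm fg}\Cc}$, a natural isomorphism of functors $\Hom_{{}^{{\rm fg}\Cc}\Mm}(-,M^{*})\cong \Hom_{\Mm^{{\rm fg}\Cc}}(M,{}^{*}(-))$ on ${}^{{\rm fg}\Cc}\Mm$, and since ${}^{*}(-)$ is an exact contravariant equivalence it carries the short exact sequences of ${}^{{\rm fg}\Cc}\Mm$ onto all short exact sequences of $\Mm^{{\rm fg}\Cc}$. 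Hence $M^{*}$ is injective in ${}^{{\rm fg}\Cc}\Mm$ iff $\Hom_{\Mm^{{\rm fg}\Cc}}(M,-)$ is exact, i.e. iff $M$ is projective in $\Mm^{{\rm fg}\Cc}$; exchanging the roles of the two categories, $M^{*}$ is projective in ${}^{{\rm fg}\Cc}\Mm$ iff $M$ is injective in $\Mm^{{\rm fg}\Cc}$.

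Finally I would chain the equivalences: $M$ is projective in $\Mm^\Cc$ $\iff$ $M$ is projective in $\Mm^{{\rm fg}\Cc}$ (by \prref{3.2}) $\iff$ $M^{*}$ is injective in ${}^{{\rm fg}\Cc}\Mm$ (by the duality) $\iff$ $M^{*}$ is injective in ${}^\Cc\Mm$ (by the left version of \prref{3.2}), and symmetrically with ``projective'' and ``injective'' swapped. I do not expect any real difficulty here; the only points needing care, and where the bookkeeping lies, are (i) confirming that \prref{3.2} is available on the left-hand side, which is why I stressed the left--right symmetry of qF-rings and the two-sided projectivity of $\Cc$, and (ii) making sure the duality is genuinely exact and not merely left exact, for which the self-injectivity of $R$ as a left and right module over itself is exactly what is used.
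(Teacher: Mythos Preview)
Your proposal is correct and follows exactly the approach the paper intends: the paper's entire ``proof'' is the sentence ``From this duality and \prref{3.2}, we obtain the following result,'' and you have simply spelled out the chain of equivalences (\prref{3.2} on the right, then the contravariant equivalence $\Mm^{{\rm fg}\Cc}\simeq{}^{{\rm fg}\Cc}\Mm$, then the left-hand analogue of \prref{3.2}) that this sentence is abbreviating. Your attention to the two bookkeeping points---that \prref{3.2} is available on the left because a qF-ring is also left artinian and $\Cc$ is right projective, and that the duality is exact because $R$ is self-injective---is appropriate and fills in what the paper leaves implicit.
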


\begin{theorem}\thlabel{4.3}
Let $R$ be a qF-ring, and $\Cc$ an $R$-coring that is (locally) projective as a left and
right $R$-module.  The following assertions are equivalent.
\begin{enumerate}
\item[(i)] $\Rat^{\Cc}$ is exact;
\item[(ii)] $\Rat^{\Cc}({}^*\Cc)$ is dense in ${}^*\Cc$;
\item[(iii)] $\Rat^{\Cc}({}^*M)$ is dense in ${}^*M$ for every left
$\Cc$-comodule $M$;
\item[(iv)] $\Rat^{\Cc}({}^*Q)$ is dense in ${}^*Q$ for every left injective
$\Cc$-comodule $Q$;
\item[(v)] $\Rat^{\Cc}({}^*Q)$ is dense in ${}^*Q$ for every left injective
indecomposable $\Cc$-comodule $Q$;
\item[(vi)] ${}^*Q$ is ${}^*\Cc$-rational for every left injective
indecomposable $\Cc$-comodule $Q$;
\item[(vii)] $E(S)$ is finitely generated for every simple left comodule $S$;
\item[(viii)] every simple right $\Cc$-comodule has a finitely generated
projective cover;
\item[(ix)] $\Cc$ is right semiperfect.
\end{enumerate}
\end{theorem}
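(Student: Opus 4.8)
The plan is to prove \thref{4.3} by first establishing the equivalence of the "density/rationality" conditions (i)--(vii) among themselves, and then connecting this cluster to the "semiperfectness" conditions (viii)--(ix). The first cluster is largely in hand: (i)$\Leftrightarrow$(ii) is \prref{2.3}, and (ii)$\Leftrightarrow$(vii) is \prref{2.7}, which applies here because, as observed in the preceding paragraph, a qF-ring $R$ is injective over itself, so $\Cc=\Cc\ot_R R$ is injective in ${}^\Cc\Mm$, and $R$ qF is in particular left artinian. So the real work in the first cluster is slotting in conditions (iii)--(vi), which interpolate between (ii) and (vii).

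**First I would** handle the implications (iii)$\Rightarrow$(iv)$\Rightarrow$(v)$\Rightarrow$(ii) and (v)$\Leftrightarrow$(vi), which are essentially formal: (iii)$\Rightarrow$(iv)$\Rightarrow$(v) are restrictions to smaller classes of comodules, and (v)$\Rightarrow$(ii) follows since $\Cc=\bigoplus_{i\in I}E(S_i)$ by \prref{2.7} with each $E(S_i)$ indecomposable injective, so that ${}^*\Cc=\prod_i {}^*E(S_i)$ and \coref{1.4} (or \prref{1.3}(ii)) lets us assemble density on $\Cc$ from density on each ${}^*E(S_i)$. For (v)$\Leftrightarrow$(vi): $\Rat^\Cc({}^*Q)$ dense in ${}^*Q$ together with $Q$ indecomposable injective forces ${}^*Q$ rational --- this is exactly the argument at the end of the proof of \prref{2.7} (the sequence $0\to {}^\perp S\to {}^*E(S)\to {}^*S\to 0$ with small kernel and rational quotient, via \prref{1.2}(i)); conversely rational trivially implies dense. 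The harder interpolation is (ii)$\Rightarrow$(iii): assuming $\Rat^\Cc$ exact (equivalently, by \coref{2.6} and \prref{2.7}, $\Cc$ decomposes into finitely generated left comodules), I would argue that for an arbitrary left comodule $M$, $\Rat^\Cc({}^*M)$ is dense in ${}^*M$ by embedding $M$ into an injective left comodule $Q$ (which by (vii) and \prref{2.7} is a direct sum of finitely generated $E(S_i)$'s), using \prref{2.2}(i) to see ${}^*E(S_i)$ is rational hence ${}^*Q=\prod {}^*E(S_i)$ has $\Rat^\Cc({}^*Q)\supseteq\bigoplus{}^*E(S_i)$ dense by \coref{1.4}, and then pushing density down along the epimorphism ${}^*Q\to {}^*M$ dual to $M\hookrightarrow Q$. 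Since $R$ is qF, $R$ is injective, so by \prref{2.1}(iii) the functor ${}_R\Hom(-,R)$ is exact, giving the epimorphism; density then transports by \prref{1.5} with $T=R$ injective.

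**The main obstacle** is the passage between the two clusters, i.e.\ (vii)$\Leftrightarrow$(viii) and the link to (ix). For (vii)$\Rightarrow$(viii): given a simple right comodule $S'$, its dual $(S')^*$ is, via \coref{4.2} (using that $\Cc$ is projective on both sides so the duality between ${}^{{\rm fg}\Cc}\Mm$ and $\Mm^{{\rm fg}\Cc}$ applies), a finitely generated ... actually I would instead dualize the whole picture: the simple right comodules correspond under $(-)^*$ to simple left comodules, their injective envelopes in ${}^\Cc\Mm$ are by (vii) finitely generated, and dualizing a finitely generated injective left comodule $E(S)$ gives, by \coref{4.2}, a finitely generated \emph{projective} right comodule $P=E(S)^*$, with $P\to S'={}^*(\text{soc})$ -- the dual of $S\hookrightarrow E(S)$ -- an epimorphism whose kernel is small because duality is exact and takes essential monos to superfluous epis over a qF-ring (here ${}^\perp S\ll {}^*E(S)$ is already extracted in the proof of \prref{2.7}). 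So $S'$ has a finitely generated projective cover. For (viii)$\Leftrightarrow$(ix): this is \thref{3.1} applied with the qF-ring $R$ in place of a general right artinian ring --- (ix) is condition (i) of \thref{3.1}, (viii) is condition (v) of \thref{3.1}, and these are equivalent there (note $R$ qF is right artinian and $\Cc$ satisfies the left $\alpha$-condition since it is locally projective on the left).

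**Finally**, I would close the loop by checking that (viii) (say) implies (vii): from (viii), the projective cover $P\to S'$ of a simple right comodule is finitely generated, dualize via \coref{4.2} to get a finitely generated injective left comodule $P^*$ receiving an essential monomorphism from the simple left comodule $(S')^*$ (again essentiality comes from smallness of the kernel under the exact duality), so $P^*=E((S')^*)$ is finitely generated; since every simple left comodule arises as some $(S')^*$, condition (vii) holds. This makes (vii), (viii), (ix) mutually equivalent and joins them to (i)--(vi), completing the cycle. I expect the duality bookkeeping --- keeping straight which side is projective/injective and that $(-)^*$ on a qF-ring interchanges "essential mono" with "superfluous epi" and "finitely generated" with "finitely cogenerated" appropriately --- to be the delicate part, but all the needed ingredients (\coref{4.2}, \prref{2.7}, \prref{3.2}, \thref{3.1}, \leref{1.6}) are available.
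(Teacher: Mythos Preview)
Your proposal is correct and follows essentially the same architecture as the paper's proof: the same use of \prref{2.3}, \prref{2.7}, \prref{1.3}/\coref{1.4}, \prref{1.5}, and the qF-duality via \coref{4.2} for (vii)$\Leftrightarrow$(viii). Two minor remarks: for (vi), the paper takes the more direct route (vi)$\Leftrightarrow$(vii) via \leref{2.5}(ii) (since each $E(S_i)$ is a direct summand of $\Cc$), rather than your detour through the tail of \prref{2.7}'s proof, which tacitly needs (i) and hence the already-established (v)$\Rightarrow$(ii)$\Rightarrow$(i); and your explicit invocation of \thref{3.1} for (viii)$\Leftrightarrow$(ix) is a point the paper's written proof actually leaves unstated, so you are if anything more complete there.
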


\begin{proof}
(i)$\Longleftrightarrow$(ii) follows from \prref{2.3}.\\
(ii)$\Longleftrightarrow$(v). As we have seen, $\Cc=\bigoplus_{i\in I} E(S_i)$,
and each injective indecomposable left $\Cc$-comodule is isomorphic to one
of the $E(S_i)$'s, because every comodule contains a simple comodule. The equivalence of (ii) and (v) then follows from \prref{2.4}.\\
(v)$\Longrightarrow$(iv). Every left injective comodule $Q$ is a direct sum
of injective indecomposable left $\Cc$-comodules (because its socle is essential), $Q=\bigoplus_{i\in I}Q_{i}$. Then
we have ${}^{*}Q=\prod_{i\in I}{}^{*}Q_{i}$ in $\Mm_{{}^{*}C}$ and $\bigoplus_{i\in I}\Rat^{\Cc}({}^{*}Q_{i})\subseteq\Rat^{\Cc}({}^*Q)\subset\prod_{i\in I}{}^{*}Q_{i}$ and then it all follows from \prref{1.3}.\\
(iv)$\Longrightarrow$(iii). Take $M\in {}^\Cc\Mm$ and an injective envelope
$f:\ M\to Q$ in ${}^\Cc\Mm$. We know that $\Rat^\Cc({}^*Q)$ is dense
in ${}^*Q={}_R\Hom(Q,R)$. \prref{1.5} then yields that
${}^*f(\Rat^\Cc({}^*Q))$ is dense in ${}_R\Hom(M,R)={}^*M$. But
${}^*f(\Rat^\Cc({}^*Q))\subset \Rat^\Cc({}^*M)$, so $\Rat^\Cc({}^*M)$ is dense
in ${}^*M$.\\
(iii)$\Longrightarrow$(iv)$\Longrightarrow$(v): trivial.\\
(i)$\Longleftrightarrow$(vii) follows from \prref{2.7}.\\
(vi)$\Longleftrightarrow$(vii) follows from \leref{2.5} and the fact that every injective 
indecomposable is isomorphic to one of the $E(S_{i})$'s.\\
(vii)$\Longleftrightarrow$(viii). Let $T$ be a simple right $\Cc$-comodule.
Then $T$ is finitely generated, and therefore a simple object in $\Mm^{{\rm fg}\Cc}$.
By the duality between ${}^{{\rm fg}\Cc}\Mm$ and $\Mm^{{\rm fg}\Cc}$, $T^*\in {}^{{\rm fg}\Cc}\Mm$ is simple, and $E(T^*)$ is
finitely generated by assumption. The monomorphism $T^*\to E(T^*)$ is essential,
so, using the duality, the dual map is a superfluous epimorphism
${}^*E(T^*)\to {}^{*}(T^*)\simeq T$. It follows from \coref{4.2} that ${}^*E(T^*)$
is projective, and, using again the duality, that it is finitely generated. Hence
${}^*E(T^*)$ is a finitely generated projective cover of $T$.
\end{proof}

A coring $\Cc$ is called left (resp. right) perfect if every object in ${}^\Cc\Mm$ (resp.
$\Mm^\Cc$) has a projective cover. We will now see that, over a qF-ring, perfectness on both
sides is equivalent to semiperfectness on both sides. First we need a Lemma.

\begin{lemma}\lelabel{4.4}
Let $R$ be a qF-ring, and $\Cc$ a right semiperfect coring that is both left and right profective over $R$. Then every
$0\neq M\in {}^\Cc\Mm$ contains a maximal subcomodule. Consequently the
Jacobson radical $J(M)$ is small in $M$.
\end{lemma}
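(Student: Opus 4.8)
The plan is to dualize the situation: since $R$ is a qF-ring and $\Cc$ is projective as a left and right $R$-module, we have the duality between ${}^{{\rm fg}\Cc}\Mm$ and $\Mm^{{\rm fg}\Cc}$, and right semiperfectness of $\Cc$ gives us (via \thref{4.3}) that $\Rat^\Cc$ is exact, that $\Cc=\bigoplus_{i\in I}E(S_i)$ decomposes into injective envelopes of simple left comodules, and that each $E(S_i)$ is finitely generated as a left $R$-module. First I would reduce to the case of a finitely generated submodule: since $M\neq 0$, pick $0\neq m\in M$ and let $M_0$ be the subcomodule of $M$ generated by $m$; by the Finiteness \thref{finite}, $M_0$ is finitely generated as a left $R$-module, hence has finite length since $R$ is (left) artinian, so $M_0$ has a maximal subcomodule $N_0$.

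Next I would promote the maximal subcomodule of $M_0$ to one of $M$. The idea is that $M$ embeds into an injective left $\Cc$-comodule $Q=\bigoplus_{j}E(S_{i_j})$, and one works with $M/N_0$: it suffices to find a maximal subcomodule of $M$ containing $N_0$, equivalently (passing to $M/N_0$) a maximal subcomodule of the comodule $\ol M = M/N_0$ which contains the simple subcomodule $M_0/N_0$. So I would consider a nonzero colinear map $\ol M \to \Cc$ obtained as follows: the simple comodule $M_0/N_0$ is one of the $S_i$, hence sits inside $E(S_i)\subset\Cc$; since $\Cc\in{}^\Cc\Mm$ is injective (this is exactly the hypothesis, via the argument recalled in \seref{5.1}), the inclusion $M_0/N_0\hookrightarrow\Cc$ extends along $M_0/N_0\hookrightarrow\ol M$ to a colinear map $\varphi:\ \ol M\to\Cc$. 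Its image lands in $\sum_i E(S_i)$-worth of $\Cc$ but, more usefully, composing with the projection of $\Cc$ onto the summand $E(S_i)$ gives a colinear map $\psi:\ \ol M\to E(S_i)$ that is nonzero on $M_0/N_0$ — in fact, since $S_i$ is the unique simple subcomodule of the uniform (indecomposable injective) $E(S_i)$ and $\psi$ is injective on $S_i=M_0/N_0$, the map $\psi$ is a monomorphism on the essential socle, so $\ker\psi\cap S_i=0$.

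Now $\psi:\ \ol M\to E(S_i)$ is a comodule map into a comodule $E(S_i)$ that is finitely generated as a left $R$-module, hence has finite length; so $E(S_i)$ has a maximal subcomodule, and in fact its radical $J(E(S_i))$ is proper. Pulling back a maximal subcomodule $L\subset E(S_i)$ with $\psi(M_0/N_0)\not\subset L$ — such an $L$ exists because $E(S_i)/J(E(S_i))$ is a nonzero semisimple comodule and the image of the simple $M_0/N_0$ is nonzero there, so it survives to some simple quotient — I get $\psi^{-1}(L)\subsetneq \ol M$ with simple quotient, i.e. a maximal subcomodule of $\ol M$. Lifting back through $M\to M/N_0=\ol M$ produces a maximal subcomodule of $M$. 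Finally, for the last sentence: $J(M)$ is by definition the intersection of all maximal subcomodules, so $M/J(M)$ embeds into a product of simple comodules, hence is semisimple, and $J(M)\neq M$; that $J(M)\ll M$ then follows from the standard argument (if $J(M)+L=M$ then $M/L$ is a semisimple quotient with no maximal submodule unless $L=M$, using that every proper subcomodule of a finitely generated piece is contained in a maximal one) — or one simply invokes the general fact that in this setting $J(M)$ is small once every nonzero subcomodule has a maximal subcomodule, exactly as in \cite[18.4 or 21.5]{W} transported to $\Mm^\Cc$.

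The main obstacle I expect is the middle step: extending the right choice of colinear map so that the resulting quotient is simple, i.e. arranging that the constructed subcomodule is genuinely \emph{maximal} rather than just proper. The subtlety is that $\ol M$ need not be finitely generated, so one cannot directly say ``$\ol M$ has finite length''; the trick is to route everything through the finitely generated comodule $E(S_i)$, using injectivity of $\Cc$ to get the map $\ol M\to E(S_i)$ and using that $E(S_i)$ does have a maximal subcomodule not containing the distinguished simple. Verifying that such a maximal $L\subset E(S_i)$ exists with $\psi(S_i)\not\subset L$ — which comes down to $S_i\not\subset J(E(S_i))$, true since $S_i$ is the socle and $J(E(S_i))$ is a proper subcomodule of the finite-length, hence semiperfect-in-the-module-sense, comodule $E(S_i)$ — is the one point that needs a careful, though short, argument.
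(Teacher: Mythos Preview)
Your overall strategy is sound and different from the paper's, but the justification at the point you yourself flag as delicate is actually wrong. You want a maximal subcomodule $L\subset E(S_i)$ with $\psi(S_i)=S_i\not\subset L$, and you reduce this to $S_i\not\subset J(E(S_i))$, claiming this holds because $S_i$ is the socle and $J(E(S_i))$ is proper. But that reasoning is backwards: $S_i$ is \emph{essential} in $E(S_i)$, so $S_i$ lies in every nonzero subcomodule. Whenever $E(S_i)\neq S_i$, the radical $J(E(S_i))$ is a nonzero proper subcomodule and hence $S_i\subset J(E(S_i))$. So the condition $S_i\not\subset L$ cannot be arranged in general.

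The fix is immediate and your route then goes through: you do not need $S_i\not\subset L$ at all. The image $\psi(\ol M)$ is a nonzero subcomodule of the finite-length comodule $E(S_i)$, hence itself has finite length and therefore a maximal subcomodule $L'\subsetneq\psi(\ol M)$; then $\psi^{-1}(L')$ is proper in $\ol M$ with $\ol M/\psi^{-1}(L')\cong\psi(\ol M)/L'$ simple, i.e.\ maximal. (Incidentally, the detour through $M_0$, $N_0$ and $\ol M=M/N_0$ is unnecessary: since $R$ is artinian, $M$ already contains a simple subcomodule, and you can map $M$ directly into the injective summand $E(S_i)$ of $\Cc$.)

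For comparison, the paper's proof takes a genuinely different route. It passes to the right side: by \thref{4.3}(iii), $\Rat^\Cc({}^*M)$ is dense in ${}^*M$, hence nonzero; pick a simple right subcomodule $S\subset\Rat^\Cc({}^*M)$ and form the composite $M\to({}^*M)^*\to(\Rat^\Cc({}^*M))^*\to S^*$. Using \leref{1.6} and \prref{1.5} this composite is surjective and left $\Cc^*$-linear, so its kernel is the maximal subcomodule. Your approach stays entirely on the left comodule side, using only injectivity of $\Cc$ and finite generation of the $E(S_i)$; once repaired it is arguably more direct, while the paper's argument showcases the density machinery developed earlier.
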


\begin{proof}
${}^*M\in \Mm_{{}^*\Cc}$, and $\Rat^\Cc({}^*M)$ is dense in ${}^*M$,
by \thref{4.3}. Thus, if $\Rat^\Cc({}^*M)=0$, then ${}^*M=0$, which is
impossible since $R$ is a cogenerator in ${}_R\Mm$. So
$\Rat^\Cc({}^*M)\neq 0$, and we can take a nonzero simple right subcomodule
$S$ of $\Rat^\Cc({}^*M)$. Let $u:\ S\to \Rat^\Cc({}^*M)$ and
$v:\ \Rat^\Cc({}^*M)\to {}^*M$ be the inclusion maps. Then $u$ is
right $\Cc$-colinear, and $v$ is right ${}^*\Cc$-linear.
Now consider the composition $f=u^*\circ v^{*}\circ\phi$.
$$M\rTo^{\phi}({}^{*}M)^{*}  \rTo^{v^{*}} (\Rat^\Cc({}^*M))^{*}  \rTo^{u^{*}}  S^{*}.$$
A straightforward computation shows that
$v^{*}\circ\phi$ is left $\Cc^*$-linear, and therefore
$f=u^*\circ v^{*}\circ\phi$ is also left $\Cc^*$-linear. Now
$u^*\circ v^*$ is surjective, $\im \phi$ is dense in $({}^{*}M)^{*}$,
by \leref{1.6}, so $\im f= (u^*\circ v^{*})(\im \phi)$ is dense in $S^*$,
by \prref{1.5}. Since $S$ is simple, and therefore finitely generated,
the only dense submodule of $S^*$ is $S^*$ itself. So $f:\ M\to S^*$
is a surjective $\Cc^*$-linear morphism between the left $\Cc$-comodules
$M$ and $S^*$, hence it is a left $\Cc$-colinear surjection. Since $S^*$
is simple in ${}^*\Mm$, $\Ker f$ is a maximal subcomodule of $M$.
\end{proof}

\begin{proposition}\prlabel{4.5}
Let $R$ be a qF-ring, and $\Cc$ an $R$-coring which is left and right (locally)
projective over $R$. Then the following assertions are equivalent.
\begin{enumerate}
\item[(i)] $\Cc$ is left and right perfect;
\item[(ii)] $\Cc$ is left and right semiperfect.
\end{enumerate}
\end{proposition}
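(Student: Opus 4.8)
The implication (i)$\Rightarrow$(ii) is immediate, since by (i) every comodule, in particular every simple one, has a projective cover. So the plan is to prove (ii)$\Rightarrow$(i), and as the hypotheses are left--right symmetric it suffices to show that every $M\in{}^\Cc\Mm$ has a projective cover.

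The engine is \leref{4.4}, which, because $\Cc$ is right semiperfect and left and right (locally) projective over the qF-ring $R$, yields $J(N)\ll N$ for \emph{every} $N\in{}^\Cc\Mm$; I will invoke this both for $M$ and for a (possibly very large) projective comodule built below. I would also record two preliminary facts. First, if $\pi\colon P\to S$ is a projective cover of a simple left comodule, then $\Ker\pi=J(P)$: indeed $\Ker\pi$ is a maximal subcomodule, and it is the only one, since any other maximal $L$ would give $L+\Ker\pi=P$ with $\Ker\pi\ll P$, forcing $L=P$. Second, $M/J(M)$ is semisimple: one has $J(M/J(M))=0$; the radical of a subcomodule is contained in the radical of the ambient comodule, so every finitely generated subcomodule $N_0\subseteq M/J(M)$ has $J(N_0)=0$; such an $N_0$ has finite length because $R$ is artinian and $\Cc$ is locally projective (Finiteness \thref{finite}); and a finite-length comodule with zero radical is semisimple, whence $M/J(M)$, being the sum of its finitely generated subcomodules, is semisimple.

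Now write $M/J(M)=\bigoplus_{\lambda\in\Lambda}S_\lambda$ with each $S_\lambda$ simple. Since $\Cc$ is left semiperfect, each $S_\lambda$ has a projective cover $\pi_\lambda\colon P_\lambda\to S_\lambda$. Set $P=\bigoplus_{\lambda\in\Lambda}P_\lambda$, which is projective in the Grothendieck category ${}^\Cc\Mm$, and let $\pi=\bigoplus_\lambda\pi_\lambda\colon P\to M/J(M)$; then $\Ker\pi=\bigoplus_\lambda\Ker\pi_\lambda=\bigoplus_\lambda J(P_\lambda)=J(P)$ (the quotient $P/\bigoplus_\lambda J(P_\lambda)\cong\bigoplus_\lambda S_\lambda$ being semisimple), and $J(P)\ll P$ by \leref{4.4} applied to $P$. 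Hence $\pi$ is a projective cover of $M/J(M)$. Lifting the canonical projection $u\colon M\to M/J(M)$ through the projective $P$ gives $g\colon P\to M$ with $u\circ g=\pi$; then $g(P)+J(M)=M$ together with $J(M)\ll M$ forces $g$ surjective, while $\Ker g\subseteq\Ker(u\circ g)=\Ker\pi\ll P$ forces $\Ker g\ll P$. Thus $g\colon P\to M$ is a projective cover of $M$. By symmetry every right $\Cc$-comodule also has a projective cover, so $\Cc$ is left and right perfect.

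The only step carrying real content is the identification $\Ker\pi=J(P)$ together with the appeal to \leref{4.4} for the enormous comodule $P=\bigoplus_\lambda P_\lambda$: a coproduct of superfluous subcomodules need not be superfluous, which is precisely the ``left $T$-nilpotency'' phenomenon separating perfect from semiperfect, and the role of right semiperfectness (via \leref{4.4}) is exactly to guarantee $J(P)\ll P$ and so promote the coproduct of the local projective covers to a genuine projective cover of $M/J(M)$.
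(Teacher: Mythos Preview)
Your argument is correct and follows the same route as the paper: show that $M/J(M)$ is semisimple, construct a projective cover $\pi\colon P\to M/J(M)$, lift through the canonical projection $M\to M/J(M)$ to obtain $g\colon P\to M$, and then use \leref{4.4} (for $M$) together with $\Ker g\subseteq\Ker\pi\ll P$ to conclude that $g$ is a superfluous epimorphism. Where the paper simply asserts that the semisimple comodule $M/J(M)$ has a projective cover, you spell this step out by identifying $\Ker\pi=\bigoplus_\lambda\Ker\pi_\lambda$ with $J(P)$ and invoking \leref{4.4} a second time for the (possibly infinite) coproduct $P=\bigoplus_\lambda P_\lambda$; this is a genuine clarification of precisely the point you highlight in your final paragraph, and it is the place where both semiperfectness hypotheses are actually used.
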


\begin{proof}
The implication (i)$\Rightarrow$(ii) is trivial. Conversely,
we will first show that $M/J(M)$ is a semisimple object in $\Mm^\Cc$, for any $M\in \Mm^\Cc$.
Take $\ol{x}\in M/J(M)$, and let $N$ be the subcomodule of $M/J(M)$ generated by
$\ol{x}$. Then $N\subset M/J(M)$, hence $J(N)\subset J(M/J(M))=0$. $N$ is finitely
generated, and therefore artinian. Let $N_1,\cdots,N_n$ be maximal subcomodules
of $N$ such that $\bigcap_{i=1}^n N_i=0$. Then $N=\bigoplus_{i=1}^n N/N_i$ is semisimple. This shows that every
$\ol{x}\in M/J(M)$ belongs to a semisimple subcomodule, so $M/J(M)$ is semisimple.\\
Since $\Cc$ is right semiperfect, there exists a projective cover $f:\ P\to M/J(M)$.
Since $P$ is projective, there exists $g\in \Mm^\Cc$ making the following diagram
commutative ($\pi$ is the canonical projection):
\begin{diagram}
& & P & & \\
& \SW^{g} & \dTo^{f} & & \\
M & \rTo^{\pi} & M/J(M) & \rTo & 0
\end{diagram}
Now $\pi(J(M)+g(P))=\pi(g(P))=f(P)=M/J(M)$, so $J(M)+g(P)=M$, since $\pi$ is surjective.
$\Cc$ is left semiperfect, hence, by \leref{4.4}, $J(M)\ll M$, and we conclude that
$g(P)=M$. So $g$ is surjective. $\Ker f\ll P$ and $\Ker g\subset \Ker f$, hence
$\Ker g\ll P$, and we conclude that $g:\ P\to M$ is a projective cover of $M$.
\end{proof}

\subsection{Examples}\selabel{5.2}
\begin{example}\exlabel{5.1}
Let $\Cc$ be a coring, and assume that $\Cc$ is finitely generated and projective as
a left $R$-module. Then $\Mm^\Cc$ is isomorphic to $\Mm_{{}^*\Cc}$, and $\Rat^\Cc$
is an isomorphism of categories. Hence $\Rat^\Cc$ is exact. $\Mm^\Cc$ has enough
projectives, but not necessarily projective covers. As an example, let $R$
be a non-semiperfect ring, and $\Cc=R$, the trivial $R$-coring. Then $\Mm^\Cc=\Mm_R$
is not semiperfect.
\end{example}

\begin{example}\exlabel{5.2}
Let $\Cc$ be a cosemisimple coring. Then $\Cc$ is left and right semiperfect, since
the categories of left and right $\Cc$-comodules are semisimple,
see \cite[19.14]{BW}, \cite{Kaoutit} and \cite{Kaoutit2}.
In this case, $\Cc$ is projective in ${}_R\Mm$ and $\Mm_R$, so $\Cc$ satisfies the
left and right $\alpha$-condition. $\Cc$ can then be written as a direct sum of finitely generated
left (or right) $\Cc$-comodules, and the functors $\Rat^\Cc$ and ${}^\Cc\Rat$ are exact.
So all the equivalent statements of \thref{4.3} hold, without the assumption that
the base ring $R$ is a qF-ring.
\end{example}

\begin{example}\exlabel{5.3}
To a ring morphism $\iota:\ R\to S$, we can associate the Sweedler coring $\Cc$.
As an $S$-bimodule, $\Cc=S\ot_R S$, and the comultiplication and counit are given by
the formulas
$$\Delta(s\ot_R s')=(s\ot_R 1)\ot_S (1\ot_R s')~~;~~\varepsilon(s\ot_R s')= ss'$$
The Sweedler coring is important in descent theory: the comodules over $\Cc$ are
exactly the descent data from \cite{KO} (in the commutative case) and \cite{Cipolla}
(in the noncommutative case). If $M\in \Mm^\Cc$, then $M$ descends to an $R$-module
$$M^{{\rm co}\Cc}=\{m\in M~|~\rho(m)=m\ot_R 1\}$$
For a detailed discussion, we refer to \cite{C}. It is
also easy to see that we have an isomorphism of $R$-algebras
$${}^*\Cc= {}_S\Hom(S\ot_R S,S)\cong {}_R\End(S)$$
(again, ${}_R\End(S)$ is a ring with the opposite composition as multiplication). Also notice that $S\subset {}_R\End(S)$ as algebras, by right multiplication.\\
If we assume that $S\in {}_R\Mm$ is locally projective, then $\Cc\in {}_S\Mm$ is
locally projective, and we can consider the functor
$$\Rat^\Cc:\ \Mm{}_{{}_R\End(S)}\to \Mm^\Cc$$
Let $M$ be a right ${}_R\End(S)$-module, and
take $m\in M$. Then $m\in \Rat^\Cc(M)$ if and only if there exists
$m_{[0]}\ot_R m_{[1]}\in M\ot_RS$ such that $m\cdot f=m_{[0]}f(m_{[1]})$, for
all $f\in {}_R\End(S)$. In particular,
\begin{eqnarray*}
\Rat^\Cc(M)^{{\rm co}\Cc}&=&\{m\in \Rat^\Cc(M)~|~\rho(m)=m\ot_R1\}\\
&=&\{m\in M~|~ m\cdot f=mf(1),~{\rm for~all}~f\in {}_R\End(S)\}
\end{eqnarray*}
$\Rat^\Cc(M)$ is a right $\Cc$-comodule, and therefore a right ${}_R\End(S)$-module,
and, by restriction of scalars, a right $S$-module. Therefore
\begin{equation}\eqlabel{5.3.1}
\Rat^\Cc(M)^{{\rm co}\Cc}\cdot S\subset \Rat^\Cc(M)
\end{equation}
If we take $M={}_R\End(S)$, then we see that
$$\Rat^{\Cc}(M)^{{\rm co}\Cc}=\{g\in{}_R\End(S)~|~(f\circ g)(s)=g(s)f(1),~
{\rm for~all}~f\in {}_R\End(S)\}$$
Take $h\in {}^*S={}_R\Hom(S,R)$. Then $\ol{h}=h\circ\iota\in {}_R\Hom(S,S)$,
and it follows easily that $\ol{h}\in \Rat^\Cc({}_R\End(S))^{{\rm co}\Cc}$.  
We will use this to show that $\Rat^\Cc(\End(S))$ is dense in
${}_R\End(S)$.\\
Take $f\in {}_R\End(S)$, and $F\subset S$ finite. Since $S$ is locally projective,
there are $h_1,\cdots,h_n\in {}^*S$ and $x_1,\cdots,x_n\in S$ such that
$$x=\sum_{k=1}^n h_k(x)x_k$$
for $x\in F$ and then a simple computation shows that
$$f(x)=\sum_{k=1}^n h_k(x)f(x_k)=\Bigl(\sum_{k=1}^n\ol{h}_k\cdot f(x_k)\Bigr)(x)$$
By \equref{5.3.1} and the the fact the above argument, $\sum_{k=1}^n\ol{h}_k\cdot f(x_k)\in \Rat^\Cc(\End(S))$.
So we have shown that $f$ coincides on $F$ to an element in $\Rat^\Cc(\End(S))$.
We conclude that $\Rat^\Cc({}^*\Cc)$ lies dense in ${}^*\Cc$, and, by \prref{2.3},
$\Rat^\Cc$ is exact.\\
If $S$ is pure as a left and right $R$-module, in particular if
 $S\in {}_R\Mm$ is faithfully flat, then the categories $\Mm_R$ and $\Mm^\Cc$
are equivalent (see \cite{C,Cipolla,KO}). In this case, $\Mm^\Cc$ has enough projectives.\\
If $S\in {}_R\Mm$ is faithfully flat and locally projective, then we have an explicit
description of $\Rat^\Cc(M)$, namely
$$\Rat^\Cc(M)=\Rat^\Cc(M)^{{\rm co}\Cc}\ot_RS,$$
with $\Rat^\Cc(M)^{{\rm co}\Cc}$ given by \equref{5.3.1}.
\end{example}

\end{document}